\newtheorem{theorem}{Theorem}[section]
\newtheorem{lemma}[theorem]{Lemma}
\theoremstyle{definition}
\newtheorem{remark}[theorem]{Remark}
\newtheorem{proposition}[theorem]{Proposition}
\newcommand{\R}{\mathbb{{R}}}
\newcommand{\Z}{\mathbb{{Z}}}
\newcommand{\N}{\mathbb{{N}}}
\newcommand{\C}{\mathbb{{C}}}
\numberwithin{equation}{section}
\DeclareRobustCommand{\eqrefp}[2]{%
  \textup{\tagform@{\refp{#1}{#2}}}}
\DeclareRobustCommand{\refp}[2]{%
  \expandafter\ifx\csname r@#1\endcsname\relax
    \textbf{??}%
  \else
    \edef\areferencia{\ref{#1}-)}%
    \expandafter\eqrefpaux\areferencia-#2%
  \fi}
\def\eqrefpaux#1-#2){#1}
\begin{document}
\title[Asymptotic behavior for the discrete in time heat equation]{Asymptotic behavior for the discrete in time heat equation}

\author[L. Abadias]{Luciano Abadias}
\address[L. Abadias]{Departamento de Matem\'aticas, Instituto Universitario de Matem\'aticas y Aplicaciones, Universidad de Zaragoza, 50009 Zaragoza, Spain.}
\email{labadias@unizar.es}

\author[E. Alvarez]{Edgardo Alvarez}
\address[E. Alvarez]{Universidad del Norte, Departamento de Matem\'aticas y Estad\'istica, Barranquilla, Colombia}
\email{ealvareze@uninorte.edu.co}


\thanks{The first named author has been partly supported by Project MTM2016-77710-P, DGI-FEDER, of the MCYTS and Project E26-17R, D.G. Arag\'on, Universidad de Zaragoza, Spain.}

\subjclass[2010]{39A14; 35B40; 35A08; 35C15; 39A12}

\keywords{Discrete heat equation; Large-time behavior; Decay of solutions; Discrete fundamental solution}

\date{}
\maketitle

\begin{abstract}
In this paper we investigate the asymptotic behavior and decay of the solution of the discrete in time $N$-dimensional heat equation. We give a convergence rate with which the solution tends to the discrete fundamental solution, and the asymptotic decay, both in $L^p(\R^N).$ Furthermore we prove optimal $L^2$-decay of solutions. Since the technique of energy methods is not applicable, we follow the approach of  estimates based on the discrete fundamental solution which is given by an original integral representation and also by MacDonald's special functions. As a consequence, the analysis is different to the continuous in time heat equation and the calculations are rather involved.
\end{abstract}

\section{Introduction}

The linear heat equation $u_t=\Delta u$ is one of the most studied problems in the theory of partial differential equations. It was introduced by J. Fourier (see \cite{Fourier}) to model several diffusion phenomena. Since then, it has been applied in the study of different processes in many mathematical areas such as PDEs, functional analysis, harmonic analysis, probability, among others. The nature of this problem is well known and we will not further explain it.

One of the aspects of interest, see \cite{F,V,Z}, is the large time behavior of solutions of the heat problem
\begin{equation}\label{eq0.1}
\left\{
\begin{array}{lll}
u_t= \Delta u,\quad t\geq 0,x\in\R^N, \\
u(0) =f(x).
\end{array}
\right.
\end{equation}
If $f\in L^1(\R^N),$ the solution of \eqref{eq0.1} on $L^p(\R^N)$ is  $u(t)=G_t*f,$ where $*$ denotes the classical convolution on $\R^N$ and $$G_t(x)=\frac{1}{(4\pi t)^{N/2}}e^{-\frac{|x|^2}{4t}},\quad t>0, x\in\R^N,$$ is the heat kernel. It is known that integrating over all of $\R^N$ we get that the total mass of solutions is conserved for all time, that is, $$\int_{\R^N}u(t,x)\,dx=\int_{\R^N}u(0,x)\,dx.$$ This fact leads us to think that the total mass of solutions should have importance in the asymptotic behavior of solutions. Indeed, it is well known that if $M=\int_{\R^N}u(0,x)\,dx$ then \begin{equation}\label{0.1}t^{\frac{N}{2}(1-1/p)}\|u(t)-MG_{t}\|_{p}\to0,\quad\text{as }t\to \infty,\end{equation} for $1\leq p\leq \infty.$ The previous estimate shows that the difference on $L^p(\R^N)$ between the solution $u(t)$ and
$MG_t$ decays to zero like $o(\frac{1}{t^{\frac{N}{2}(1-1/p)}})$ as $t$ goes to infinity.

Also, it is known that the $p$-norms of the solution vanish as $t\to\infty$ for $p>1.$ This fact is known as that the $p$-energy is not conservative. More precisely $$\|u(t)\|_{p}\lesssim \frac{\|f\|_{q}}{t^{\frac{N}{2}(1/q-1/p)}},\quad \|\nabla u(t)\|_{p}\lesssim \frac{\|f\|_{q}}{t^{\frac{N}{2}(1/q-1/p)+1/2}},\quad \|\Delta u(t)\|_{p}\lesssim \frac{\|f\|_{q}}{t^{\frac{N}{2}(1/q-1/p)+1}},$$
for $f\in L^q(\R^N),1\leq q\leq p\leq\infty.$

One can consider the first moment as the vector quantity $\int_{\R^n}x\,u(t,x)\,dx.$ It can be seen that such moment is also conserved in time for the solution of \eqref{eq0.1} whenever $(1+|x|)f\in L^1(\R^N).$ Moreover, under such assumption we are able to improve the convergence \eqref{0.1}, that is,$$t^{\frac{N}{2}(1-1/p)+1/2}\|u(t)-MG_{t}\|_{p}\to0,\quad\text{as }t\to \infty.$$ However, the second moment $$\int_{\R^n}|x|^2\,u(t,x)\,dx=\int_{\R^n}|x|^2\,f(x)\,dx+2Nt$$ is not conservative. In fact, it is known that only integral quantities conserved by the solutions of \eqref{eq0.1} are the mass and the first moment.

This type of large-time asymptotic results have been also studied for several diffusion problems. For example in \cite{DD,ZE,GV,KS,N} the authors studied large-time behaviour and other asymptotic estimates for the solutions of different diffusion problems in $\R^N,$ and similar aspects are studied for open bounded domains in \cite{D2, GV}. Estimates for heat kernels on manifolds have been also studied in \cite{Gr,L,D}. In \cite{M}, the author obtains gaussian upper estimates for the heat kernel associated to the sub-laplacian on a Lie group, and also for its first-order time and space derivatives.

On the other hand, finite differences, sometimes also called discrete derivatives, were introduced some centuries ago, and they have been used along the literature in different mathematical problems, mainly in approximation of derivatives for the numerical solution of differential equations and partial differential equations. The most knowing ones are the forward, backward and central differences (the forward and backward differences are associated to the Euler, explicit and implicit, numerical methods). We denote them in the following way; let $h>0,$ for a function $u$ defined on the mesh $\Z_{h}:=\{nh\,:\, n\in \Z\}$ we write $$\delta_{\text{right}}u(nh):=\frac{u((n+1)h)-u(nh)}{h},\quad \delta_{\text{left}}u(nh):=\frac{u(nh)-u((n-1)h)}{h},$$and $$\delta_c u(nh):=\biggl( \frac{\delta_{\text{right}}+\delta_{\text{left}}}{2}\biggr)u(nh)=\frac{u((n+1)h)-u((n-1)h)}{2h}.$$

In the last years, and taking as a guide the paper \cite{Bateman}, several authors have been working in the context of partial difference-differential equations (\cite{ADT, AL, C, C2, PAMS, LR}) from an specific point of view; in that papers the approach has been focused in mathematical analysis, more precisely, harmonic analysis, functional analysis and fractional differences. Particularly in \cite{ADT} it is shown that the operators $\delta_{\text{right}}$ and $\delta_{\text{left}}$ generate markovian $C_0$-semigroups on $\ell^p(\Z).$ Also, in \cite{C}, the authors study harmonic properties of the solution of the heat problem on one-dimensional graphs (the mesh $\Z_h$), and the wave equation on graphs is studied in \cite{LR}. An abstract approach for discrete in time Cauchy problems is given in \cite{PAMS}. Also, non-local problems in the discrete framework appear in \cite{AL, C2}.

The previous comments motivate the main aim of this paper; let $h>0,$ we consider the first order Cauchy problem for the heat equation in discrete time
\begin{equation}\label{eq1.1}
\left\{
\begin{array}{lll}
\delta_{\text{left}}u(nh,x) = \Delta u(nh,x)+g(nh,x),\quad n\in\N,\,x\in \R^N, \vspace{2mm}\\
u(0,x) =f(x),
\end{array}
\right.
\end{equation}
where $\Delta$ is the classical laplacian on $L^p(\R^N)$, $u$ is defined on  $\N_0^h\times \R^N$, with $\N_0^h:=\{nh:n\in\N_0\}$, $f$ is defined on $\R$ and $g$ is defined on $\N^h\times \R^N,$ with $\N^h:=\{nh:n\in \N\}.$

Along the paper we study asymptotic behavior and decay of the solution of \eqref{eq1.1}. For that purpose, we need to know properties of the fundamental solution of the homogeneous problem associated to \eqref{eq1.1} (when $g=0$). In fact, one of the key points to get such asymptotic properties is an integral representation of the fundamental solution for the associated homogeneous equation. Furthermore, we describe explicitly this solution in terms of MacDonald's functions which arise naturally from the integral representation of the solution. This representation is quite original and allows to study the decay of solutions for the problem \eqref{eq1.1} when the initial datum belongs to $p$-integrable Lebesgue spaces. Moreover, both the integral representation and the explicit expression via MacDonald's functions allow to give a quantitative rate at which the solution converges to M times the fundamental solution, where $M$ will denote, as in the continuous case, the initial mass of solution. The techniques used to obtain our results differs to the continuous case because we have to deal with the integral representation and
asymptotic properties of MacDonald's special functions. We also note to the reader that doing the relation $t=nh,$ the asymptotics of $G_t$ will be similar to $\mathcal{G}_{n,h}$ as $t\to\infty$ or equivalently $n\to\infty,$ where $\mathcal{G}_{n,h}$ will denote the fundamental solution of the homogeneous problem associated to \eqref{eq1.1}.

In this paper we are not interested in the study of the convergence as $h\to 0$ of solutions of the problems \eqref{eq1.1} (depending on $h$) to the classical heat problem. However, it can be seen as a natural problem studied in semigroup theory via Yosida approximants (see Remark \ref{remark2.1}). Also, one can think about the possibility to consider similar problems to \eqref{eq1.1} but considering the discrete derivatives $\delta_{\text{right}}$ or $\delta_c.$ However, as we explain in Remark \ref{remark2.2}, the fundamental solutions to that problem does not have good properties.

The paper is organized as follows. Section \ref{homogeneo} is focused in the fundamental solution of the homogeneous problem associated to \eqref{eq1.1}. We introduce an integral representation and the explicit expression via MacDonald's functions. We deduce basic properties, we calculate its gradient and laplacian, and we see that the mass and the first moment of solutions of the homogeneous problem are conservative in discrete time $nh,$ and not the second moment. Also some pictures of the continuous and discrete gaussian kernels, with their corresponding comments, are stated. In Section \ref{3} we give pointwise and $L^p$ asymptotic upper bounds for the fundamental solution $\mathcal{G}_{n,h},$ and we use such estimates to prove in Section \ref{4} that the $p$-energies of solutions of \eqref{eq1.1} are dissipative. Section \ref{5} is the main part of the paper; we prove the asymptotic behaviour for the discrete in time heat problem (Theorem \ref{Theorem5.1}). In Section 6 we success in proving optimal $L^2$-decay estimates for the solution of the homogeneous problem associated to \eqref{eq1.1}. The proof is based on Fourier analysis techniques. Finally we include an Appendix where we show some basic properties of Gamma and MacDonald's functions, and a technical result about integrability.


\section{The discrete gaussian fundamental solution}\label{homogeneo}

In this section we study the fundamental solution for the homogeneous discrete in time heat initial value problem on the Lebesgue $L^p(\R^N)$  spaces.
Let $h>0,$ we consider
\begin{equation}\label{maineq1}
\left\{
\begin{array}{lll}
\delta_{\text{left}}u(nh,x) = \Delta u(nh,x),\quad n\in\N,\,x\in\mathbb{R}^N, \vspace{2mm}\\
u(0,x) =f(x),
\end{array}
\right.
\end{equation}
where $u$ and $f$ are functions defined on $\N_0^h\times \R^N$ and $\R^N$, respectively. Formally, one can write the solution in the following way
\begin{equation*}\label{eq-1}
u(nh,x) =\frac{1}{h^n} (1/h - \Delta)^{-n} f(x),\quad n\in \N,\,x\in\mathbb{R}^N,
\end{equation*}
whenever the resolvent operator $(1/h - \Delta)^{-1}$ has sense. It is well known that
the laplacian operator $\Delta$ associated to the standard heat equation in continuous time on $L^p(\R^N)$ for $1\leq p\leq \infty$  generates the gaussian semigroup with convolution kernel $$G_t(x)=\frac{1}{(4\pi t)^{N/2}}e^{-\frac{|x|^2}{4t}},\quad t>0,\, x\in\R^N.$$ From semigroup theory (see \cite[Corollary 1.11]{EN}) we obtain
$$
	u(nh,\cdot)=\frac{1}{h^n} (1/h - \Delta)^{-n} f(\cdot)=  \frac{1}{h^n\Gamma(n)} \int_0^\infty e^{-t/h} t^{n-1} (G_t*f)(\cdot)dt:=(\mathcal{G}_{n,h}*f)(\cdot),\,\,f \in L^p(\R^N),
$$
where $*$ denotes the classical convolution on $\R^N$ and  \begin{equation}\label{eq2.3}\mathcal{G}_{n,h}(x)= \frac{1}{h^n\Gamma(n)} \int_0^\infty e^{-t/h} t^{n-1} G_t(x)\,dt,\quad n\in\N,x\in\R^N\setminus\{0\}.
\end{equation}

\begin{remark}\label{remark2.1}{\it
Note that fixed a positive number $t>0,$ the Yosida approximants  $(1-\frac{t}{n}\Delta)^{-n}$ (see \cite[Theorem 3.5]{EN}) allow to approximate the gaussian $C_{0}$-semigroup $G_t$ as $n\to\infty.$ Writing $h=t/n,$ the previous convergence shows that the gaussian semigroup can be approximated  by the solutions of the discrete in time problems \eqref{maineq1} as the mesh $h\to 0.$}
\end{remark}

\begin{remark}\label{remark2.2}{\it
It is easy to see that if we consider the forward difference $\delta_{\text{right}}$ on \eqref{maineq1}, then formally the solution of the problem would be $u(nh,\cdot) =\frac{1}{h^n} (1/h + \Delta)^{n} f(\cdot),$ which is not defined (bounded) on $L^p(\R^N).$

Also, for the central difference $\delta_{\text{c}}$, the fundamental solution would be given by $$\int_{0}^{\infty}J_n(t/h)G_{t}(x)\,dt,$$ where $J_n$ are the Bessel functions of first kind. In this case is not difficult to prove that the solution is bounded on $L^p(\R^N),$ however it does not have as good properties as $\mathcal{G}_{n,h}$ satisfies, for example the contractivity on $L^1(\R^N).$

These are the main reasons because of we consider the discrete in time heat problem with the backward difference $\delta_{\text{left}}.$}
\end{remark}

Now we will see the explicit expression of the fundamental solution $\mathcal{G}_{n,h}$ in terms of special functions. By \cite[p.363 (9)]{G} we have
\begin{eqnarray}\label{Macdonald}
	\nonumber \mathcal{G}_{n,h}(x)&=& \frac{1}{h^n\Gamma(n)(4\pi)^{N/2}} \int_0^\infty e^{-(t/h+|x|^2/4t)}t^{n-N/2-1}\,dt\\
	& = & \frac{2}{\Gamma(n)(4\pi h)^{N/2}}\biggl( \frac{|x|}{2\sqrt{h}} \biggr)^{n-N/2}K_{n-N/2}\biggl(  \frac{|x|}{\sqrt{h}} \biggr), \quad  n\in\N,x \in \R^N\setminus\{0\}.
\end{eqnarray}
Here, the functions $K_{\nu}$ denote  the  Bessel functions of imaginary argument, also called MacDonald's functions or modified cylinder functions (see Section \ref{Appendix}). Observe that the identity has not pointwise sense for $x=0$ if $ N/2-n\geq 0.$ In fact, for that values $ N/2-n\geq 0$, taking $|x|\to 0$ in \eqref{Macdonald} and using (P4) and (P6) of Appendix one gets $\mathcal{G}_{n,h}(x)\to\infty.$ However, as we will see, good properties on $L^p(\R^N)$ hold. For the case $ N/2-n< 0,$ by (P4) we have $\mathcal{G}_{n,h}(x)\to \frac{\Gamma(n-N/2)}{\Gamma(n)(4\pi h)^{N/2}}$ as $|x|\to 0.$

\begin{remark}{\it
The gaussian kernel satisfies the semigroup property on time, $G_t\ast G_s=G_{t+s}$. Since $\mathcal{G}_{n,h}$ is given by natural powers of the resolvent operator of the laplacian, it satisfies the discrete semigroup property. Indeed, we also can prove that property using the expression \eqref{eq2.3} as follows,
\begin{align*}
(\mathcal{G}_{n,h}\ast \mathcal{G}_{m,h})(x)&=\frac{1}{h^{n+m}\Gamma(n)\Gamma(m)}\int_{\mathbb{R}^N}\left(\int_0^{\infty}
\int_0^{\infty}e^{-\frac{t+s}{h}}t^{n-1}s^{m-1}G_t(x-y)G_s(y)\,ds\,dt\right)dy\\
&=\frac{1}{h^{n+m}\Gamma(n)\Gamma(m)}\int_0^{\infty}
\left(\int_0^{\infty}e^{-\frac{t+s}{h}}t^{n-1}s^{m-1}G_{t+s}(y)\,ds\right)dt\\
&=\frac{1}{h^{n+m}\Gamma(n)\Gamma(m)}\int_0^{\infty}
\left(\int_t^{\infty}e^{-\frac{\sigma}{h}}t^{n-1}(\sigma-t)^{m-1}G_{\sigma}(x)\,d\sigma\right)dt\\
&=\frac{1}{h^{n+m}\Gamma(n)\Gamma(m)}\int_0^{\infty}e^{-\frac{\sigma}{h}}G_{\sigma}(x)
\left(\int_0^{\sigma}t^{n-1}(\sigma-t)^{m-1}\,dt\right)d\sigma\\
&=\frac{1}{h^{n+m}\Gamma(n)\Gamma(m)}\int_0^{\infty}e^{-\frac{\sigma}{h}}G_{\sigma}(x)
\sigma^{m+n-1}B(n,m)\,d\sigma=\mathcal{G}_{n+m,h}(x).
\end{align*}
Here $B(n,m)$ is the Beta function.}
\end{remark}

In the following we denote
\begin{equation*}\label{poissonkernel}
p_{n,h}(t)=:\frac{1}{h^n\Gamma(n)}e^{-t/h} t^{n-1},\quad n\in\N.
\end{equation*}
Then we can write
	\begin{equation}\label{Integralrepresentationheat}
	\mathcal{G}_{n,h}(x) =\int_0^{\infty}p_{n,h}(t)G_t(x)dt,\quad x\neq 0.
	\end{equation}
The above integral representation is a discretization formula for the gaussian semigroup. The case $h=1$ was treated in \cite{PAMS} for a general $C_0$-semigroup on an abstract context.



Next, we refer to the function $\mathcal{G}_{n,h}$ as the \emph{fundamental solution} for the problem \eqref{maineq1}. The following proposition states some basic properties of it.

\begin{proposition}\label{HeatProperties}
The function $\mathcal{G}_{n,h}$ satisfies:
	\begin{enumerate}
	\item[(i)] $\displaystyle\mathcal{G}_{n,h}(x)>0,\quad n\in\N,x \neq 0.$
	\item[(ii)] $\displaystyle \int_{\R^N}\mathcal{G}_{n,h}(x)\ dx=1$.
	\item[(iii)] $\displaystyle\mathcal{F}({\mathcal{G}}_{n,h})(\xi)=\frac{1}{(1+h|\xi|^2)^n},\quad \xi\in\R^N.$
	\item[(iv)] $\dfrac{\mathcal{G}_{n,h}(x)-\mathcal{G}_{n-1,h}(x)}{h} = \Delta \mathcal{G}_{n,h}(x),\quad n\geq 2,x\neq 0.$
    \item[(v)] $\displaystyle \int_{\R^N}|x|^2\mathcal{G}_{n,h}(x)\,dx=2Nnh.$

	\end{enumerate}
\end{proposition}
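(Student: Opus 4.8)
The plan is to exploit the integral representation \eqref{Integralrepresentationheat}, which exhibits $\mathcal{G}_{n,h}$ as an average of the heat kernels $G_t$ against the probability density $p_{n,h}$, a Gamma density of shape $n$ and scale $h$. Each of the five assertions is then inherited from the corresponding classical property of $G_t$ by integrating against $p_{n,h}$, with Tonelli's theorem used to exchange the order of integration (every integrand being nonnegative). The finiteness needed throughout is guaranteed by the exponential decay of $K_{n-N/2}$ for large argument (recorded in the Appendix), so $\mathcal{G}_{n,h}$ has finite moments of every order.

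For (i), positivity is immediate since $p_{n,h}(t)>0$ and $G_t(x)>0$ for every $t>0$. For (ii), I would combine $\int_{\R^N}G_t(x)\,dx=1$ with the normalization $\int_0^\infty p_{n,h}(t)\,dt=1$ (the substitution $u=t/h$ reduces this to the definition of $\Gamma(n)$), so that Tonelli gives $\int_{\R^N}\mathcal{G}_{n,h}=\int_0^\infty p_{n,h}(t)\,dt=1$. For (iii), I would insert the heat symbol $\mathcal{F}(G_t)(\xi)=e^{-t|\xi|^2}$ into \eqref{Integralrepresentationheat}, obtaining
\begin{equation*}
\mathcal{F}(\mathcal{G}_{n,h})(\xi)=\frac{1}{h^n\Gamma(n)}\int_0^\infty e^{-t(1/h+|\xi|^2)}t^{n-1}\,dt=\frac{1}{h^n(1/h+|\xi|^2)^n}=\frac{1}{(1+h|\xi|^2)^n},
\end{equation*}
where the middle integral is again a Gamma integral. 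For (v), I would use the second moment of the heat kernel, $\int_{\R^N}|x|^2 G_t(x)\,dx=2Nt$, so that Tonelli and the mean of the Gamma density, $\int_0^\infty t\,p_{n,h}(t)\,dt=nh$, yield $\int_{\R^N}|x|^2\mathcal{G}_{n,h}(x)\,dx=2N\int_0^\infty t\,p_{n,h}(t)\,dt=2Nnh$.

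The genuine work lies in (iv), the assertion that $\mathcal{G}_{n,h}$ solves the discrete heat equation. The cleanest route is through (iii): since the Laplacian acts as multiplication by $-|\xi|^2$ on the Fourier side, one computes
\begin{equation*}
\mathcal{F}\!\left(\frac{\mathcal{G}_{n,h}-\mathcal{G}_{n-1,h}}{h}\right)(\xi)=\frac{1}{h}\left[\frac{1}{(1+h|\xi|^2)^n}-\frac{1}{(1+h|\xi|^2)^{n-1}}\right]=\frac{-|\xi|^2}{(1+h|\xi|^2)^n}=\mathcal{F}(\Delta\mathcal{G}_{n,h})(\xi),
\end{equation*}
and concludes by injectivity of the Fourier transform. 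Alternatively, working directly from \eqref{Integralrepresentationheat}, I would differentiate under the integral sign, replace $\Delta G_t$ by $\partial_t G_t$, and integrate by parts in $t$; the key algebraic identity is the recurrence $p_{n,h}'(t)=\tfrac{1}{h}\bigl(p_{n-1,h}(t)-p_{n,h}(t)\bigr)$, which follows from $\Gamma(n)=(n-1)\Gamma(n-1)$. The points requiring care here, and the main obstacle of the whole proposition, are the justification of passing $\Delta$ inside the integral and the vanishing of the boundary terms $p_{n,h}(t)G_t(x)$ as $t\to0^+$ and $t\to\infty$; the restriction $n\geq2$ with $x\neq0$ is exactly what kills the $t\to0^+$ term, since $G_t(x)$ decays faster than any power as $t\to0^+$ for fixed $x\neq0$.
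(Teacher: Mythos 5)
Your proposal is correct, and for parts (i), (ii), (iii) and (v) it coincides with the paper's argument: the authors likewise read everything off the subordination formula \eqref{Integralrepresentationheat}, using that $p_{n,h}$ is a probability density with mean $nh$ and that $G_t$ has unit mass, symbol $e^{-t|\xi|^2}$ and second moment $2Nt$. Where you diverge is (iv): the paper argues directly on the integral representation, using the recurrence $\frac{d}{dt}p_{n,h}(t)=-\frac{1}{h}\bigl(p_{n,h}(t)-p_{n-1,h}(t)\bigr)$, the heat equation $\partial_t G_t=\Delta G_t$, and an integration by parts in $t$ whose boundary terms vanish — exactly the ``alternative'' route you sketch second. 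Your primary route via the Fourier symbol $(1+h|\xi|^2)^{-n}$ is cleaner algebraically and is validated a posteriori by the paper itself, which reproves (iv) in Remark \ref{norm1} by brute-force differentiation of the MacDonald representation; the only point you should make explicit is that the Fourier computation identifies the \emph{distributional} Laplacian with the difference quotient, and one then invokes smoothness of $\mathcal{G}_{n,h}$ on $\R^N\setminus\{0\}$ to upgrade this to the pointwise identity claimed for $x\neq 0$ (the function may be singular at the origin when $n\leq N/2$, so this restriction step is not vacuous). One small inaccuracy in your closing remark: the hypothesis $n\geq 2$ is needed so that $p_{n-1,h}$, i.e.\ $\mathcal{G}_{n-1,h}$, is defined at all; the vanishing of the boundary term at $t\to 0^+$ is supplied by the Gaussian factor $e^{-|x|^2/4t}$ for $x\neq 0$ and holds for every $n$.
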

\begin{proof} (i) It is clear by \eqref{Integralrepresentationheat}. (ii) Note that $\int_0^{\infty}p_{n,h}(t)dt=1$ and  $\int_{\R^N}G_t(x)dx=1,$ then the result follows from the Fubini's theorem. (iii) It is known that $\mathcal{F}({G}_{t})(\xi)=e^{-t|\xi|^2}$, for $\xi\in\R^N,$ then  by \eqref{Integralrepresentationheat} one gets $$\mathcal{F}(\mathcal{G}_{n,h})(\xi)=\int_{0}^{\infty}p_{n,h}(t)\mathcal{F}({G}_t)(\xi)\,dt =\frac{1}{(1+h|\xi|^2)^n}.$$
\noindent (iv) First of all, observe that $\frac{d}{dt}p_{n,h}(t)=-\frac{1}{h}(p_{n,h}(t)-p_{n-1,h}(t))$ for $n\geq 2.$ Then integrating by parts we get

\begin{eqnarray*}\frac{\mathcal{G}_{n,h}(x)-\mathcal{G}_{n-1,h}(x)}{h}&=&\int_{0}^{\infty}p_{n,h}(t)\frac{\partial}{\partial t}G_t(x)\ dt\\
	&=&\int_{0}^{\infty}p_{n,h}(t)\Delta G_t(x)\ dt\\
	&= &\Delta \mathcal{G}_{n,h}(x), \quad x\neq 0,
	\end{eqnarray*}
	where we have used that $\lim_{t \to 0^+}$ and $\lim_{t\to\infty}$ of $p_{n}(t)G_t(x)$ vanishes. 
(v) It follows easily by the second moment of $G_t(x)$ and the representation \eqref{eq2.3}.
\end{proof}

\begin{remark}\label{norm1}{\it
Observe that one can prove the above properties via the expression \eqref{Macdonald} given by the MacDonald's function. For example,  from (P1) of Appendix we get the positivity of the fundamental solution. Furthermore,  by \cite[p. 668 (16)]{G}  it follows
\begin{align*}
\int_{\R^N}\mathcal{G}_{n,h}(x)\,dx&= \frac{2}{\Gamma(n)(4\pi h)^{1/2}}\int_{\mathbb{R}^N}\biggl( \frac{|x|}{2\sqrt{h}} \biggr)^{n-N/2}K_{n-N/2}\biggl(  \frac{|x|}{\sqrt{h}} \biggr)\, dx\\
&=\frac{2^{1+\frac{N}{2}-n}}{\Gamma(n)(4\pi h)^{1/2}h^{\frac{n}{2}-\frac{N}{4}}}\left(\frac{N\pi^{N/2}}{\Gamma(\frac{N}{2}+1)}
\int_0^{\infty}r^{n-N/2}K_{n-N/2}\biggl(  \frac{r}{\sqrt{h}} \biggr)\, dr\right)\\
&=\frac{\frac{N}{2}\Gamma(\frac{N}{2})h^{\frac{N}{2}+\frac{N}{4}}}{h^{\frac{N}{2}+\frac{N}{4}}\Gamma(\frac{N}{2}+1)}=1.
\end{align*}	
Also note that by $\frac{\partial |x|}{\partial x_j}=\frac{x_j}{|x|}$ and (P2) of Appendix, we obtain \begin{equation}\label{derivative}
\frac{\partial}{\partial x_j}\mathcal{G}_{n,h}(x)=\frac{-2}{\Gamma(n)(4\pi h)^{N/2}\sqrt{h}}\frac{x_j}{|x|}\biggl( \frac{|x|}{2\sqrt{h}} \biggr)^{n-N/2}K_{n-N/2-1}\biggl(  \frac{|x|}{\sqrt{h}} \biggr),\end{equation}
 and then derivating once more in the previous expression and taking into account (P3) and (P7) (with $\nu=n-\frac{N}{2}-1$) of Appendix, we have
\begin{align}\label{derivative}
\nonumber\frac{\partial^2}{\partial x_j^2}\mathcal{G}_{n,h}(x)&=-\frac{1}{h\Gamma(n)(4\pi h)^{N/2}}
\left(\frac{|x|}{2\sqrt{h}}\right)^{n-\frac{N}{2}-1}K_{n-\frac{N}{2}-1}\left(\frac{|x|}{\sqrt{h}}\right)\\
\nonumber &+\frac{x_j^2}{h}\frac{1}{2h\Gamma(n)(4\pi h)^{N/2}}
\left(\frac{|x|}{2\sqrt{h}}\right)^{n-\frac{N}{2}-2}K_{n-\frac{N}{2}}\left(\frac{|x|}{\sqrt{h}}\right)\\
\nonumber &+N\frac{x_j^2}{h}\frac{1}{4h\Gamma(n)(4\pi h)^{N/2}}
\left(\frac{|x|}{2\sqrt{h}}\right)^{n-\frac{N}{2}-3}K_{n-\frac{N}{2}-1}\left(\frac{|x|}{\sqrt{h}}\right)\\
\nonumber &-2(n-1)\frac{x_j^2}{h}\frac{1}{4h\Gamma(n)(4\pi h)^{N/2}}
\left(\frac{|x|}{2\sqrt{h}}\right)^{n-\frac{N}{2}-3}K_{n-\frac{N}{2}-1}\left(\frac{|x|}{\sqrt{h}}\right).
\end{align}
Now, since $\sum_{j=1}^N\frac{x_j^2}{h}=\left(\frac{|x|}{\sqrt{h}}\right)^2$, we get
$$
\Delta\mathcal{G}_{n,h}(x)=\frac{\mathcal{G}_{n,h}(x)-\mathcal{G}_{n-1,h}(x)}{h}.
$$

Finally observe that the mean square displacement can be also calculated in the following way; using \eqref{Macdonald}, a change of variables and \cite[p.668 (16)]{G}, we have
\begin{align*}
\int_{\R^N}|x|^2\mathcal{G}_{n,h}(x)\,dx&=\frac{2}{\Gamma(n)(4\pi h)^{N/2}(2\sqrt{h})^{N-1/2}}\int_{\R^N} |x|^{n-N/2+2} K_{n-N/2}\biggl(  \frac{|x|}{\sqrt{h}} \biggr)\,dx\\
&=\frac{2}{\Gamma(n)(4\pi h)^{N/2}(2\sqrt{h})^{N-1/2}}\left(\frac{N\pi^{N/2}}{\Gamma(\frac{N}{2}+1)}
\int_0^{\infty}r^{n+\frac{N}{2}+1}K_{n-N/2}\biggl( \frac{r}{\sqrt{h}}\biggr)\,dr\right)\\
&=2Nnh.
\end{align*}
}
\end{remark}

\begin{remark}{\it Note that by Proposition \ref{HeatProperties} (i) we have that the total mass of solution of \eqref{maineq1} is conservative in the discrete time $nh,$ that is, $$\int_{\R^N}u(nh,x)\,dx=\int_{\R^N}f(x)\,dx.$$ Moreover, the first moment is also conservative; if $(1+|x|)f\in L^1(\R^N)$ one gets $$\int_{\R^N}x(u(nh,x)-u((n-1)h,x))\,dx=h\int_{\R^N}x \Delta u(nh,x)\,dx=0,$$ and so $\int_{\R^N}x u(nh,x)\,dx=\int_{\R^N}x f(x)\,dx.$ However, as in the continuous case holds, by Proposition \ref{HeatProperties} (v) it follows that the second moment is $$\int_{\R^n}|x|^2\,u(nh,x)\,dx=\int_{\R^n}|x|^2\,f(x)\,dx+2Nnh.$$}
\end{remark}
To finish this section we show some pictures of the fundamental solution of \eqref{maineq1}. We have used Mathematica to make them. The objective is that the reader visualizes the convergence of $\mathcal{G}_{n,h}$ to $G_t$ as the mesh $h\to 0.$

Figure 1 shows, in the one-dimensional case ($N=1$), the Gauss kernel $G_1$ and the fundamental solutions of the discrete problems for several values of $h.$ As we have mentioned, the Yosida approximants (which are the fundamental solutions) converge to the gaussian kernel as $h\to 0$ writing $t=nh.$ Therefore, for the different values of $h,$ we choose $n$ such that $nh=1.$ For example for $h=1/2$ we have represented the fundamental solution $\mathcal{G}_{2,1/2}.$ Also, observe that for $N=1$ the fundamental solution $\mathcal{G}_{n,h}(x)$ is defined on the whole real line since $n-N/2>$ for all $n\in\N.$ However by \eqref{derivative}, and (P6) and (P4) of Appendix we get $$\mathcal{G}_{1,h}'(x)=C_{h}\frac{x}{|x|^{1/2}}K_{-1/2}(\frac{|x|}{\sqrt{h}})=C_{h}\frac{x}{|x|^{1/2}}K_{1/2}(\frac{|x|}{\sqrt{h}})\sim C_{h}\frac{x}{|x|}, \quad  x\to 0,$$ where $C_h$ is a constant depending on $h.$ This shows that $\mathcal{G}_{1,h}$ is not derivable in $x=0$ (see Figure 1 for $h=1$).


\begin{figure}[h]
\caption{}
\centering
\includegraphics[width=0.57\textwidth]{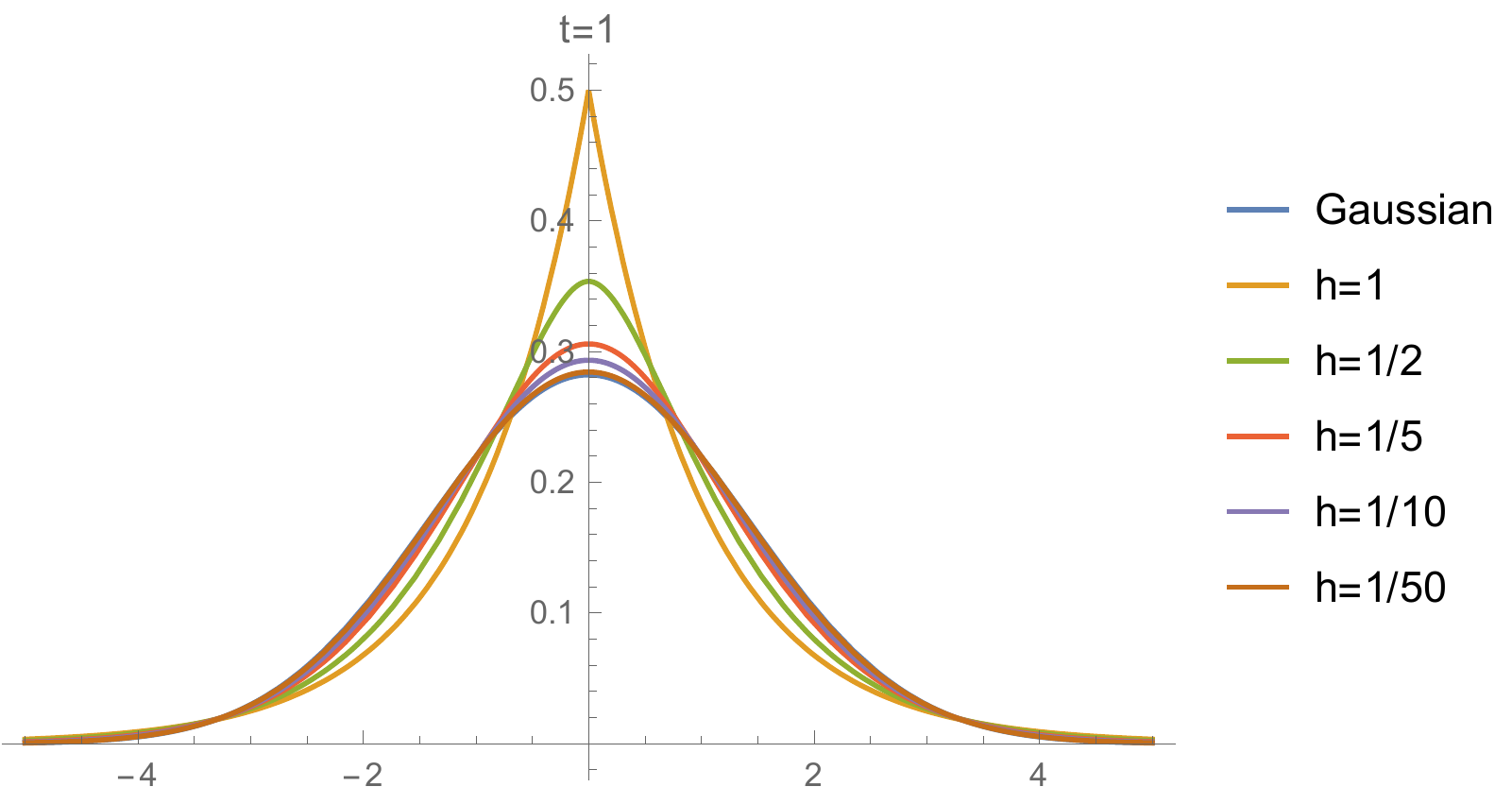}
\end{figure}


Figure 2, Figure 3 and Figure 4 show several approximants to the gaussian $G_1$  in the two-dimensional case ($N=2$). In Figure 2 we observe that $\mathcal{G}_{1,1}(x)\to +\infty$ taking $x\to0,$ as we have commented previously (since $n-N/2=0$ for $n=1,N=2$).


\begin{figure}[h]
\caption{}
\centering
\includegraphics[width=0.57\textwidth]{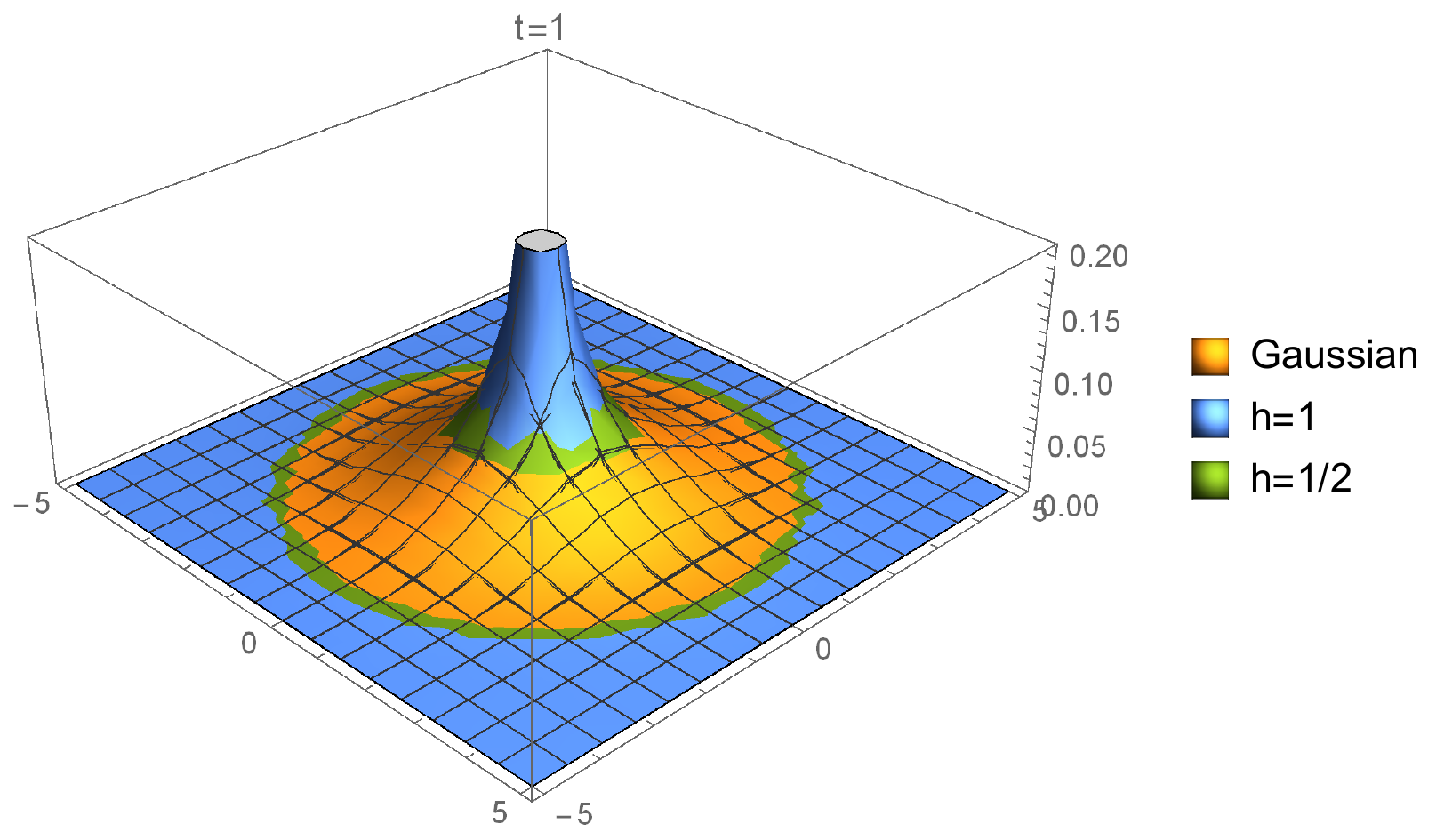}
\end{figure}

\vskip 1.0cm

In Figure 3 we glimpse that  $\mathcal{G}_{2,1/2}(x)$ is well defined for $x=0,$ but it is not differentiable (since $n-N/2=1$). Finally, Figure 4 shows, not only as the approximation improves as $h$ decreases, but also that the function is smoother.


\begin{figure}[h]
\caption{}
\centering
\includegraphics[width=0.57\textwidth]{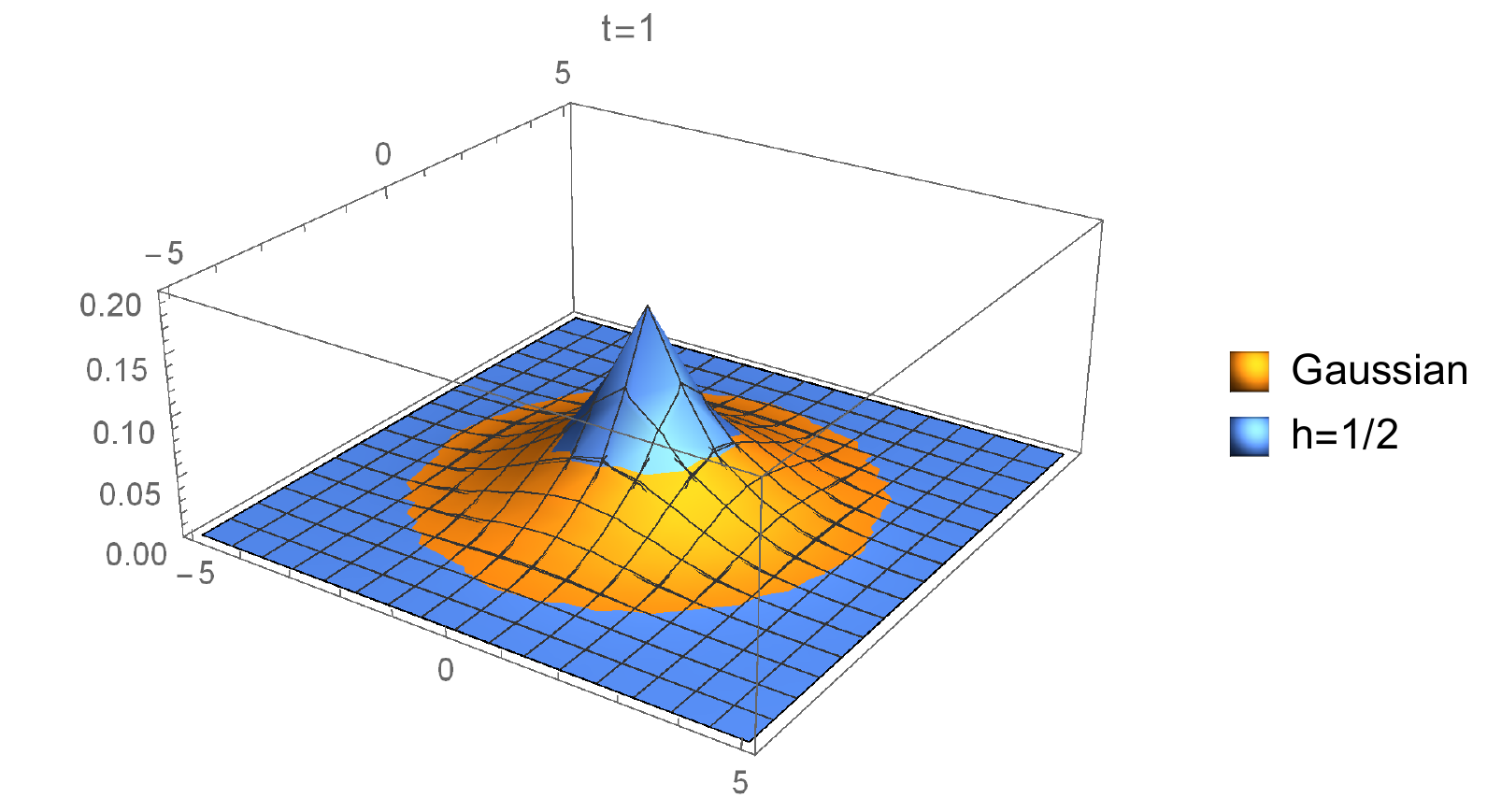}
\end{figure}


\begin{figure}[h]
\caption{}
\centering
\includegraphics[width=0.57\textwidth]{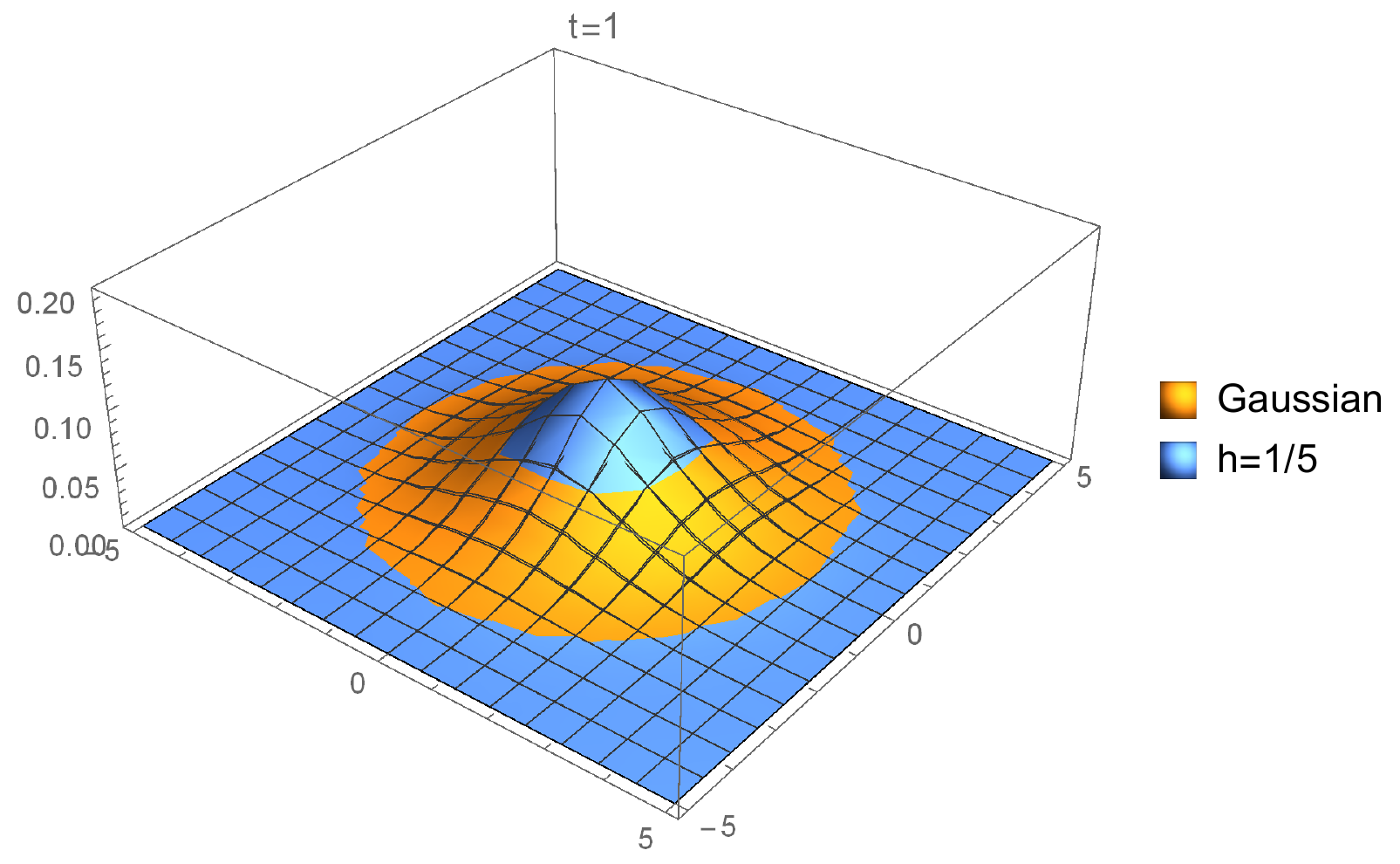}
\end{figure}



\section{Estimates for the fundamental solution}\label{3}

In this section we present pointwise and $p$-norm estimates for the fundamental solution of \eqref{maineq1}.


\begin{theorem}\label{Pointwiseestimates}
Let $R:=\frac{|x|^2}{nh}.$
\begin{itemize}

\item[(i)] If $R\leq 1,$ then  $$|\mathcal{G}_{n,h}(x)|\sim \frac{1}{(nh)^{N/2}},\,\text{ for }n-N/2>0,$$ and if $R\geq 1$, then $$|\mathcal{G}_{n,h}(x)|\lesssim \frac{nh}{|x|^{N+2}}.$$

\item[(ii)] If $R\leq 1,$ then  $$|\nabla\mathcal{G}_{n,h}(x)|\lesssim \frac{|x|}{(nh)^{N/2+1}},\,\text{ for }n-N/2>1,$$ and if $R\geq 1$, then $$|\nabla\mathcal{G}_{n,h}(x)|\lesssim \frac{nh}{|x|^{N+3}}.$$

\item[(iii)]  If $R\leq 1,$ then  $$|\frac{\mathcal{G}_{n,h}(x)-\mathcal{G}_{n-1,h}(x)}{h}|\lesssim \frac{1}{(nh)^{N/2+1}},\,\text{ for }n-N/2>1,$$ and if $R\geq 1$, then $$|\frac{\mathcal{G}_{n,h}(x)-\mathcal{G}_{n-1,h}(x)}{h}|\lesssim \frac{1}{|x|^{N+2}}.$$

\end{itemize}
\end{theorem}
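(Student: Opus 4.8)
The plan is to use the explicit MacDonald representation \eqref{Macdonald} together with the asymptotic properties of $K_\nu$ collected in the Appendix, distinguishing the two regimes $R\leq 1$ (small argument of $K_\nu$) and $R\geq 1$ (large argument). Writing $\mathcal{G}_{n,h}(x)=\frac{2}{\Gamma(n)(4\pi h)^{N/2}}\bigl(\frac{|x|}{2\sqrt{h}}\bigr)^{n-N/2}K_{n-N/2}\bigl(\frac{|x|}{\sqrt{h}}\bigr)$, I would set $\nu:=n-N/2$ and $z:=|x|/\sqrt{h}$, so that $R=z^2/n$. The point is that the regime $R\leq 1$ corresponds to $z\lesssim\sqrt{n}$ and $R\geq 1$ to $z\gtrsim\sqrt{n}$, but the true dichotomy for the behavior of $K_\nu(z)$ is whether $z$ is small or large relative to the \emph{order} $\nu\sim n$; so the main technical work is to track the large-order (uniform) asymptotics of $K_\nu$, not merely the fixed-order small/large argument expansions.

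For part (i) in the regime $R\leq 1$, I would argue that since the total mass is $1$ (Proposition \ref{HeatProperties}(ii)) and the kernel is a probability density concentrated, by the second-moment identity (v), at scale $|x|\sim\sqrt{nh}$, the peak value must be of order $(nh)^{-N/2}$; to make this rigorous from \eqref{Macdonald} I would use Stirling for $\Gamma(n)$ and the uniform large-order asymptotics of $K_\nu(\nu\,\zeta)$ (the standard $K_\nu(\nu z)\sim\sqrt{\pi/(2\nu)}\,(1+z^2)^{-1/4}e^{-\nu\eta}/\cdots$ type expansion, which is essentially (P4)--(P7) pushed to large order) to evaluate the product $\nu^{\nu}K_\nu$ and extract the $(nh)^{-N/2}$ scaling. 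In the regime $R\geq 1$ I would instead exploit that for $z\gtrsim\sqrt{n}$ the exponential decay $K_\nu(z)\sim\sqrt{\pi/(2z)}\,e^{-z}$ (P4) dominates; after substituting $z=|x|/\sqrt h$ and using $|x|^2=Rnh\geq nh$, a direct bound $\bigl(\frac{|x|}{2\sqrt h}\bigr)^{\nu}e^{-|x|/\sqrt h}\lesssim (nh)\,|x|^{-(N+2)}\,h^{N/2}$ should drop out by comparing the polynomial prefactor against the decaying exponential and absorbing constants; this is the bound $\lesssim nh/|x|^{N+2}$.

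Parts (ii) and (iii) follow the same two-regime template but applied to the differentiated kernels. For (ii) I would start from the gradient formula \eqref{derivative}, namely $\partial_{x_j}\mathcal{G}_{n,h}(x)=\frac{-2}{\Gamma(n)(4\pi h)^{N/2}\sqrt h}\frac{x_j}{|x|}\bigl(\frac{|x|}{2\sqrt h}\bigr)^{\nu}K_{\nu-1}\bigl(\frac{|x|}{\sqrt h}\bigr)$, and repeat the $K_{\nu-1}$ asymptotics exactly as above; the extra factor $\frac{x_j}{|x|\sqrt h}$ contributes the shift from $(nh)^{-N/2}$ to $|x|(nh)^{-N/2-1}$ in the near regime and from $nh/|x|^{N+2}$ to $nh/|x|^{N+3}$ in the far regime, while the stronger hypothesis $n-N/2>1$ is exactly what is needed for the small-argument expansion of $K_{\nu-1}$ to stay integrable/bounded. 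For (iii) I would use Proposition \ref{HeatProperties}(iv) to replace the difference quotient by $\Delta\mathcal{G}_{n,h}$, and then bound $\Delta\mathcal{G}_{n,h}$ from the second-derivative expression computed in Remark \ref{norm1} (the four-term sum involving $K_{\nu-1}$ and $K_{\nu}$), again regime by regime.

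The main obstacle I expect is the case $R\leq 1$: here the argument $z=|x|/\sqrt h$ ranges up to order $\sqrt n$ while the order of the Bessel function is also of order $n$, so neither the fixed-order small-argument expansion (P6) nor the large-argument expansion (P4) is uniformly valid across the whole range, and one genuinely needs the \emph{uniform} large-order asymptotics of $K_\nu$ together with Stirling's formula for $\Gamma(n)$, with the two asymptotic errors tracked simultaneously so that the leading $(nh)^{-N/2}$ term survives and the remainder is controlled uniformly in $x$ with $R\leq 1$. The far regime $R\geq 1$ is comparatively routine because exponential decay overwhelms every polynomial and Gamma factor, so the only care needed there is bookkeeping of the $h$-powers to land on the claimed $nh/|x|^{N+2}$, $nh/|x|^{N+3}$, and $1/|x|^{N+2}$ bounds.
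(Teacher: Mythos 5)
Your plan is essentially the paper's proof: both work from the MacDonald representation \eqref{Macdonald}, treat $R\leq 1$ via the small-argument asymptotics of $K_{\nu}$ combined with the Gamma-ratio asymptotics \eqref{GammaAsymp}, treat $R\geq 1$ via the exponential decay of $K_{\nu}$ with all polynomial and factorial prefactors absorbed into $e^{-|x|/\sqrt{h}}$ (the paper uses $e^{-|x|/\sqrt{h}}\leq k!h^{k/2}/|x|^k$ with $k=n+N+1$), and handle (ii) and (iii) through the differentiated kernel formulas, with (iii) reduced to $\Delta\mathcal{G}_{n,h}$ via the recurrences (P2)--(P3) for $R\leq1$ and, for $R\geq1$, simply bounded by applying part (i) to $\mathcal{G}_{n,h}$ and $\mathcal{G}_{n-1,h}$ separately and using the leftover $2^{-n}$ factor to absorb the $1/h$. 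The ``main obstacle'' you anticipate in the regime $R\leq 1$ does not arise in the paper's argument, because its property (P4) is already stated with the uniform validity condition $0<z\ll\sqrt{\nu+1}$, which with $\nu=n-N/2$ and $z=|x|/\sqrt{h}$ is precisely the regime $R\lesssim 1$, so no Debye-type uniform large-order expansion is needed (also note that you have permuted the labels (P4)/(P5)/(P6) relative to the Appendix).
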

\begin{proof}
(i) Let $R\leq 1.$ By \eqref{Macdonald}, (P4) of Appendix and \eqref{GammaAsymp} we have $$|\mathcal{G}_{n,h}(x)|\sim \frac{\Gamma(n-N/2)}{\Gamma(n)(4\pi h)^{N/2}}\sim\frac{1}{(nh)^{N/2}}.$$ Now let $R\geq 1.$ Along the proof we will use that \begin{equation}\label{exponencial}e^{-\frac{|x|}{\sqrt{h}}}\leq \frac{k!h^{k/2}}{|x|^k},\quad k\in\N_0.\end{equation} By (P5) of Appendix and \eqref{exponencial} for $k=n+N+1$ one gets $$|\mathcal{G}_{n,h}(x)|\lesssim \frac{n^{3N/4+5/4}}{2^n}\frac{nh}{|x|^{N+2}}\lesssim \frac{nh}{|x|^{N+2}}.$$

(ii) Let $R\leq 1.$ Equation \eqref{derivative} implies that \begin{equation}\label{nabla}|\nabla \mathcal{G}_{n,h}(x)|=\frac{2}{\Gamma(n)(4\pi h)^{N/2}\sqrt{h}}\biggl( \frac{|x|}{2\sqrt{h}} \biggr)^{n-N/2}K_{n-N/2-1}\biggl(  \frac{|x|}{\sqrt{h}} \biggr).\end{equation} From (P4) of Appendix and \eqref{GammaAsymp} we have $$|\nabla \mathcal{G}_{n,h}(x)|\sim \frac{\Gamma(n-N/2-1)|x|}{\Gamma(n)h^{N/2+1}}\lesssim\frac{|x|}{(nh)^{N/2+1}}.$$ For $R\geq 1,$ by \eqref{nabla}, (P5) of Appendix and \eqref{exponencial} for $k=n+N+2$ we obtain $$|\nabla\mathcal{G}_{n,h}(x)|\lesssim \frac{hn(n+1)(n+2)(n+3)}{2^n|x|^4}\lesssim \frac{nh}{|x|^4}.$$

(iii) Applying (P3) and (P2) of Appendix in turn, it follows
 \begin{eqnarray*}
\frac{\mathcal{G}_{n,h}(x)-\mathcal{G}_{n-1,h}(x)}{h}&=&\frac{-2|x|^{n-N/2-1}}{\Gamma(n)(4\pi)^{N/2}h^{N/2+1/2}(2\sqrt{h})^{n-N/2}}\\
&\times &\biggl( NK_{n-N/2-1}\biggl(  \frac{|x|}{\sqrt{h}} \biggr)-\frac{|x|}{\sqrt{h}}K_{n-N/2-2}\biggl(  \frac{|x|}{\sqrt{h}} \biggr)\biggr)\\
&:=&(I)+(II).\end{eqnarray*} Let $R\leq 1.$ By (P4) of Appendix and \eqref{GammaAsymp}, that $$|(I)|\lesssim \frac{\Gamma(n-N/2-1)}{\Gamma(n)h^{N/2+1}}\sim \frac{1}{(nh)^{N/2+1}},$$ and $$|(II)|\lesssim \frac{|x|^2}{(nh)^{N/2+2}}\leq \frac{1}{(nh)^{N/2+1}}.$$ Therefore we conclude the result for $R\leq 1.$

Now let $R\geq 1.$ Note that from the part (i), we have $$|\mathcal{G}_{n,h}(x)|, |\mathcal{G}_{n-1,h}(x)|\lesssim \frac{hn^{3N/4+9/4}}{2^{n}|x|^{N+2}}.$$ Then $$|\frac{\mathcal{G}_{n,h}(x)-\mathcal{G}_{n-1,h}(x)}{h}|\lesssim \frac{1}{|x|^{N+2}}.$$

\end{proof}

Now we present the asymptotic decay of the fundamental solution $\mathcal{G}_{n,h}$ in Lebesgue and Sobolev spaces.
\begin{theorem}\label{Gaussianestimates}
Let $1 \leq p \leq \infty,$ then \begin{itemize}

\item[(i)]  $
\|\mathcal{G}_{n,h} \|_{p} \leq C_p\dfrac{1}{(nh)^{\frac{N}{2}(1-1/p)}},\quad n-\frac{N}{2}(1-1/p)>0.
$

\item[(ii)]  $
\|\nabla \mathcal{G}_{n,h}\|_{p} \leq C_p\dfrac{1}{(nh)^{\frac{N}{2}(1-1/p)+1/2}},\quad n-\frac{N}{2}(1-1/p)>1/2.
$

\item[(iii)]  $
\|  \frac{\mathcal{G}_{n,h}-\mathcal{G}_{n-1,h}}{h} \|_{p} \leq C_p\dfrac{1}{(nh)^{\frac{N}{2}(1-1/p)+1}},\quad n-\frac{N}{2}(1-1/p)>1.
$
\\
\end{itemize}
Here, $C_p$ is a constant independent of $h$ and $n$.
\end{theorem}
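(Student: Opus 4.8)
The plan is to deduce the $L^p$ bounds by integrating the pointwise estimates of Theorem \ref{Pointwiseestimates} over the two regimes $R=|x|^2/(nh)\le 1$ and $R\ge 1$, i.e. over the ball $B=\{|x|\le\sqrt{nh}\}$ and its complement, passing to polar coordinates to reduce each piece to a radial integral. The endpoint $p=\infty$ is immediate: in each item the pointwise bounds of Theorem \ref{Pointwiseestimates} are maximized on $\overline B$ and there attain exactly the powers $(nh)^{-N/2}$, $(nh)^{-N/2-1/2}$, $(nh)^{-N/2-1}$, which are the claimed values for $p=\infty$. So I focus on $1\le p<\infty$.

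On the exterior $\{R\ge 1\}$ the polynomial-decay bounds of Theorem \ref{Pointwiseestimates} hold for \emph{every} $n$ (they carry no restriction on $n$ versus $N/2$). For part (i), $|\mathcal{G}_{n,h}(x)|\lesssim nh/|x|^{N+2}$ gives $\int_{|x|\ge\sqrt{nh}}|\mathcal{G}_{n,h}|^p\,dx\lesssim (nh)^p\int_{\sqrt{nh}}^\infty r^{N-1-(N+2)p}\,dr$, which converges for all $p\ge 1$ and equals a constant times $(nh)^{-\frac N2(p-1)}$; taking $p$-th roots yields $(nh)^{-\frac N2(1-1/p)}$. The same computation with exponents $N+3$ and $N+2$ handles the exterior parts of (ii) and (iii), producing the extra factors $(nh)^{-1/2}$ and $(nh)^{-1}$.

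On the interior $B$ I split according to the sign of the relevant MacDonald order. When that order is positive, namely $n>N/2$ in (i) and $n>N/2+1$ in (ii)--(iii), Theorem \ref{Pointwiseestimates} supplies the flat bounds $(nh)^{-N/2}$, $|x|(nh)^{-N/2-1}$, $(nh)^{-N/2-1}$; integrating these over $B$ with $\int_B dx\sim (nh)^{N/2}$ and $\int_B|x|^p\,dx\sim (nh)^{(N+p)/2}$ reproduces $(nh)^{-\frac N2(p-1)}$ with the correct shift, matching the exterior contribution.

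The delicate range, and the main obstacle, is the interior in the \emph{singular} regime $\frac N2(1-1/p)<n\le N/2$ for (i) (and analogously $\frac N2(1-1/p)+\tfrac12<n\le\frac N2+1$, $\frac N2(1-1/p)+1<n\le\frac N2+1$ for (ii), (iii)), where the interior estimate of Theorem \ref{Pointwiseestimates} is unavailable because $\mathcal{G}_{n,h}$ and its derivatives blow up at the origin. Here I return to \eqref{Macdonald}, \eqref{nabla} and the expression for $\Delta\mathcal{G}_{n,h}$ and use the small-argument asymptotics (P4) together with $K_{-\nu}=K_\nu$ (P6): near $x=0$ one finds $|\mathcal{G}_{n,h}(x)|\sim C|x|^{2n-N}$, $|\nabla\mathcal{G}_{n,h}(x)|\sim C|x|^{2n-N-1}$ and $|\Delta\mathcal{G}_{n,h}(x)|\sim C|x|^{2n-N-2}$. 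The powers $(2n-N-k)p$, $k=0,1,2$, are integrable over $B$ precisely when $n>\frac N2(1-1/p)+\frac k2$, i.e. exactly under the hypothesis of each item; this matching is where the stated thresholds enter. Since for fixed $N$ this regime contains only finitely many integers $n$, the ball $B$ lies entirely in the small-argument zone $|x|\lesssim\sqrt h$, the radial integrals are finite constants, and tracking the prefactor $(4\pi h)^{-Np/2}h^{N/2}$ from \eqref{Macdonald} reproduces the growth $h^{-\frac N2(p-1)}$, while the bounded residual $n$-dependence is absorbed into $C_p$. Combining interior and exterior and taking $p$-th roots finishes each item; the crux is verifying that the origin integrability exponent coincides with the stated condition on $n$, which is exactly what guarantees $C_p$ may be chosen independent of $h$ and $n$.
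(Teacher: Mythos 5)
Your strategy is sound and the thresholds come out right, but it is a genuinely different — and considerably longer — route than the paper's. The paper never touches the pointwise estimates: it applies Minkowski's integral inequality directly to the subordination formula \eqref{Integralrepresentationheat}, uses the exact $L^p$ norms $\|G_t\|_p$, $\|\nabla G_t\|_p$, $\|\partial_t G_t\|_p$ of the continuous heat kernel, and evaluates the resulting integral as a ratio $\Gamma(n-\tfrac N2(1-\tfrac1p))/\Gamma(n)$, which \eqref{GammaAsymp} turns into $(nh)^{-\frac N2(1-1/p)}$; the hypotheses $n-\tfrac N2(1-\tfrac1p)>0$ (resp.\ $>\tfrac12$, $>1$) are exactly the convergence conditions for the Gamma integral at $t=0$. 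That argument is two lines, is automatically uniform in $n$ and $h$, and never has to confront the singularity of $\mathcal{G}_{n,h}$ at the origin. Your region-by-region integration buys something the paper's proof does not — it locates where the $L^p$ mass sits and shows the integer thresholds are precisely the local-integrability conditions at $x=0$ — but it forces you to handle cases the short proof avoids, and two of these are left loose in your sketch: (1) when the MacDonald order vanishes ($n=N/2$ in (i), and the analogous values in (ii)--(iii)), (P4) does not apply and $K_0(z)\sim\log(1/z)$ produces a logarithmic rather than power singularity — still $L^p$-integrable, but not covered by your asserted $|x|^{2n-N}$ asymptotic; (2) in the singular regime the ball $\{|x|\le\sqrt{nh}\}$ is \emph{not} contained in the small-argument zone of (P4) — the argument $|x|/\sqrt h$ ranges up to $\sqrt n$, which is merely bounded, so the annulus $\sqrt h\lesssim|x|\le\sqrt{nh}$ needs a separate (easy, by boundedness of $K_\nu$ on compacta of $(0,\infty)$) estimate. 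Both are repairable, but a complete writeup along your lines must include them; if brevity and uniformity of constants are the goal, the subordination argument is the better tool.
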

\begin{proof}
(i) It is well known (see \cite[p.334 (3.326)]{G}) that there exist $C_p$ (independent of $t$) such that $||G_t||_{p} = C_p\frac{1}{t^{\frac{N}{2}(1-\frac{1}{p})}},$ $\| \nabla G_t \|_{p} =  C_p\frac{1}{t^{\frac{N}{2}(1-\frac{1}{p})+1/2}}$ and $\| \frac{\partial}{\partial t}G_t \|_{p} \leq  C_p\frac{1}{t^{\frac{N}{2}(1-\frac{1}{p})+1}}$ for $t>0$.  Then by \eqref{Integralrepresentationheat} one gets
$$
\|\mathcal{G}_{n,h}\|_{p}\leq \frac{C_p}{h^n\Gamma(n)}\int_{0}^{\infty}e^{\frac{-t}{h}} t^{n-\frac{N}{2}(1-\frac{1}{p})-1}\,dt=C_p\frac{\Gamma(n-\frac{N}{2}(1-\frac{1}{p}))}{h^{\frac{1}{2}(1-\frac{1}{p})}\Gamma(n)}\leq \frac{C_p}{(nh)^{\frac{N}{2}(1-\frac{1}{p})}},
$$
where we have applied \eqref{GammaAsymp}.
The cases (ii) and (iii) follow in a similar way than (i).

\end{proof}

\section{Asymptotic $L^p-L^q$ decay}\label{4}

Let $h>0,$ and the time mesh $\N_0^h$. Now we consider the non-homogenous problem
\begin{equation}\label{maineq2}
\left\{
\begin{array}{lll}
\delta_{\text{left}}u(nh,x) = \Delta u(nh,x)+g(nh,x),\quad n\in\N,\,x\in \mathbb{R}^N, \vspace{2mm}\\
u(0,x) =f(x),
\end{array}
\right.
\end{equation}
where $u,f,g$ are functions defined on $\N_0^h\times \R^N,$ $\R^N$ and $\N^h\times \R^N$ respectively.

Formally, from \eqref{maineq2}, one gets \begin{equation}\label{SolNoHom}
u(nh,x)=(\mathcal{G}_{n,h}*f)(x)+h\sum_{j=1}^n(\mathcal{G}_{j,h}*g((n-j+1)h,\cdot))(x),\quad n\in \N, x\in\R^N.
\end{equation}
The expression \eqref{SolNoHom} gives a classical solution of \eqref{maineq2} on $L^p(\R^N)$ ($1\leq p\leq \infty$) whenever $f,g(nh,\cdot)\in L^p(\R^N),$ for $n\in\N.$ For convenience, we write the classical solution as $u(nh,x)=u_c(nh,x)+u_p(nh,x),$ where $$u_c(nh,x)=(\mathcal{G}_{n,h}*f)(x)$$ and $$u_p(nh,x)=h\sum_{j=1}^n(\mathcal{G}_{n-j+1,h}*g(jh,\cdot))(x).$$

Next, let us present a result about the  $L^p-L^q$ asymptotic decay for $u_c.$

\begin{theorem}\label{teorema4.1} Let $1\leq q\leq p\leq \infty.$ If $f\in L^q(\R^N)$, then the solution $u$ of \eqref{maineq2} satisfies

\begin{itemize}

\item[(i)]  $
\| u_c(nh) \|_{p} \leq C_p\dfrac{1}{(nh)^{\frac{N}{2}(1/q-1/p)}}\| f\|_{q}.
$

\item[(ii)]  $
\| \nabla u_c(nh) \|_{p} \leq C_p\dfrac{1}{(nh)^{\frac{N}{2}(1/q-1/p)+1/2}}\| f\|_{q}.
$

\item[(iii)]  $
\| \delta_{\text{left}} u_c(nh) \|_p \leq C_p\dfrac{1}{(nh)^{\frac{N}{2}(1/q-1/p)+1}}\| f\|_{q}.
$
\\
\end{itemize}
Here, $C_p$ is a constant independent of $h$ and $n$.
\end{theorem}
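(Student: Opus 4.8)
The plan is to reduce each of the three estimates to the corresponding norm bound for the fundamental solution established in Theorem \ref{Gaussianestimates}, via Young's convolution inequality. Recall from \eqref{SolNoHom} that $u_c(nh,\cdot)=\mathcal{G}_{n,h}*f$. Given $1\leq q\leq p\leq\infty$, I would choose the exponent $s\in[1,\infty]$ fixed by the Young relation $1+\frac{1}{p}=\frac{1}{s}+\frac{1}{q}$, equivalently $\frac{1}{s}=1-\left(\frac{1}{q}-\frac{1}{p}\right)$. Since $q\leq p$ gives $0\leq \frac{1}{q}-\frac{1}{p}\leq 1$, this $s$ is a legitimate exponent in $[1,\infty]$, and the key arithmetic identity $1-\frac{1}{s}=\frac{1}{q}-\frac{1}{p}$ holds.

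For (i), Young's inequality yields $\|u_c(nh)\|_p=\|\mathcal{G}_{n,h}*f\|_p\leq \|\mathcal{G}_{n,h}\|_s\,\|f\|_q$, and Theorem \ref{Gaussianestimates}(i) bounds $\|\mathcal{G}_{n,h}\|_s\leq C_s\,(nh)^{-\frac{N}{2}(1-1/s)}=C_s\,(nh)^{-\frac{N}{2}(1/q-1/p)}$, which is exactly the claimed rate. Parts (ii) and (iii) follow the same pattern once one observes that the gradient and the backward difference act only on the kernel inside the convolution: $\nabla u_c(nh)=(\nabla\mathcal{G}_{n,h})*f$ and $\delta_{\text{left}}u_c(nh)=\frac{\mathcal{G}_{n,h}-\mathcal{G}_{n-1,h}}{h}*f$. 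The second identity is immediate from linearity of the convolution and the definition of $\delta_{\text{left}}$, while the first is justified by differentiation under the integral sign, the pointwise decay of $\nabla\mathcal{G}_{n,h}$ from Theorem \ref{Pointwiseestimates}(ii) providing the needed domination. Applying Young's inequality with the same $s$ and then invoking Theorem \ref{Gaussianestimates}(ii), respectively (iii), produces the extra factors $(nh)^{-1/2}$ and $(nh)^{-1}$ in the decay exponent.

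The argument is essentially bookkeeping with the exponents, so there is no deep obstacle; the points that require care are the following. First, one must check that the regularity thresholds appearing in Theorem \ref{Gaussianestimates} are met: with $1-\frac{1}{s}=\frac{1}{q}-\frac{1}{p}$ these read $n>\frac{N}{2}(1/q-1/p)$ in (i), and analogously $n>\frac{N}{2}(1/q-1/p)+\frac{1}{2}$ and $n>\frac{N}{2}(1/q-1/p)+1$ in (ii) and (iii); all hold for $n$ large enough, which is the asymptotic regime of interest. Second, the endpoint cases $q=1$ or $p=\infty$ should be noted, where Young's inequality degenerates (e.g. to the $L^1$--$L^\infty$ bound) but remains valid. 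Finally, the resulting constant depends only on $p$ and $q$ through $s$, and is independent of $n$ and $h$ precisely because the constants furnished by Theorem \ref{Gaussianestimates} are, so writing it as $C_p$ is consistent with the statement.
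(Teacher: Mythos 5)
Your argument is exactly the paper's proof: the authors also fix $r$ with $1+\tfrac1p=\tfrac1q+\tfrac1r$, apply Young's inequality to $\mathcal{G}_{n,h}*f$, and invoke Theorem \ref{Gaussianestimates}, treating (ii) and (iii) as analogous. Your additional remarks on the regularity thresholds and on moving $\nabla$ and $\delta_{\text{left}}$ onto the kernel are correct and only make explicit what the paper leaves implicit.
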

\begin{proof}
Take $r\geq 1$ such that $1+1/p=1/q+1/r,$ and applying Young's inequality we get $$\| u_c(nh) \|_{p} =\|  \mathcal{G}_{n,h}*f\|_{p} \leq \|  \mathcal{G}_{n,h}\|_{r}\| f\|_{q}.$$ From Theorem \ref{Gaussianestimates} (i) follows the case (i). The other cases are similar.
\end{proof}

Now, assuming certain conditions on the function $g$ we get an asymptotic decay for $u_p.$

\begin{theorem} Let $1\leq q\leq p\leq \infty,$ and $g(nh,\cdot)\in L^q(\R^N)$ with $$\|g(nh,\cdot)\|_q\lesssim\frac{1}{(nh)^{\gamma}},$$ for $\gamma>0.$
\begin{itemize}

\item[(i)]  If $\gamma,\frac{N}{2}(1/q-1/p)\neq 1$, then the solution $u$ of \eqref{maineq2} satisfies $$
\| u_p(nh) \|_{p} \lesssim \frac{(nh)^{1-\min\{1,\gamma\}}}{(nh)^{\min\{1,\frac{N}{2}(1/q-1/p)\}}}.
$$

\item[(ii)]  If $\gamma= 1$ and $\frac{N}{2}(1/q-1/p)\neq 1$, then the solution $u$ of \eqref{maineq2} satisfies $$
\| u_p(nh) \|_{p} \lesssim \frac{\log nh}{(nh)^{\min\{1,\frac{N}{2}(1/q-1/p)\}}}.
$$

\item[(iii)]  If $\gamma\neq 1$ and $\frac{N}{2}(1/q-1/p)= 1$, then the solution $u$ of \eqref{maineq2} satisfies $$
\| u_p(nh) \|_{p} \lesssim \frac{\log nh}{(nh)^{\min\{1,\gamma\}}}.
$$

\item[(iv)]  If $\gamma,\frac{N}{2}(1/q-1/p)= 1$, then the solution $u$ of \eqref{maineq2} satisfies $$
\| u_p(nh) \|_{p} \lesssim \frac{\log nh}{nh}.
$$

\end{itemize}
\end{theorem}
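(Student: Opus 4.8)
The plan is to estimate each summand of the particular solution $u_p(nh)=h\sum_{j=1}^n\mathcal{G}_{n-j+1,h}*g(jh,\cdot)$ by Young's inequality together with Theorem \ref{Gaussianestimates}, thereby reducing the whole statement to the $n$-asymptotics of a single scalar convolution sum. First I would fix $r\geq 1$ with $1+1/p=1/q+1/r$ and set $\alpha:=\frac{N}{2}(1/q-1/p)=\frac{N}{2}(1-1/r)$. For each $j$ Young's inequality gives $\|\mathcal{G}_{n-j+1,h}*g(jh,\cdot)\|_p\leq\|\mathcal{G}_{n-j+1,h}\|_r\,\|g(jh,\cdot)\|_q$, and Theorem \ref{Gaussianestimates}(i) bounds $\|\mathcal{G}_{m,h}\|_r\lesssim (mh)^{-\alpha}$ whenever $m>\alpha$; combined with the hypothesis $\|g(jh,\cdot)\|_q\lesssim (jh)^{-\gamma}$ this yields
\begin{equation*}
\|u_p(nh)\|_p\lesssim h\sum_{j=1}^n\big((n-j+1)h\big)^{-\alpha}(jh)^{-\gamma}.
\end{equation*}
Since $h$ is fixed, only the dependence on $n$ matters, and this sum is a Riemann-type discretization of the Beta integral $\int_0^t(t-s)^{-\alpha}s^{-\gamma}\,ds$ with $t=nh$, whose value is $t^{1-\alpha-\gamma}B(1-\alpha,1-\gamma)$ precisely when $\alpha,\gamma<1$.

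The core computation is then the estimate of $T_n:=\sum_{j=1}^n(n-j+1)^{-\alpha}j^{-\gamma}$, and I would carry it out by splitting the range at $j=\lfloor n/2\rfloor$. On $1\leq j\leq\lfloor n/2\rfloor$ one has $n-j+1\gtrsim n$, so that part is $\lesssim n^{-\alpha}\sum_{j\leq n/2}j^{-\gamma}$; on $\lfloor n/2\rfloor<j\leq n$ one has $j\gtrsim n$, so after reindexing $k=n-j+1$ that part is $\lesssim n^{-\gamma}\sum_{k\leq n/2}k^{-\alpha}$. Writing $\Sigma_m(\beta):=\sum_{k=1}^m k^{-\beta}$ and inserting the classical asymptotics $\Sigma_m(\beta)\sim m^{1-\beta}$ for $\beta<1$, $\Sigma_m(\beta)\sim\log m$ for $\beta=1$, and $\Sigma_m(\beta)\sim\text{const}$ for $\beta>1$, gives $T_n\lesssim n^{-\alpha}\Sigma_n(\gamma)+n^{-\gamma}\Sigma_n(\alpha)$; restoring the factor $h^{1-\alpha-\gamma}$ (absorbed into the implicit constant, as $h$ is fixed) converts powers of $n$ into powers of $nh$. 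Collecting the four sign combinations of $\alpha-1$ and $\gamma-1$ produces the overall rate $(nh)^{1-\min\{1,\gamma\}-\min\{1,\alpha\}}$ of case (i), whereas a boundary exponent $\alpha=1$ or $\gamma=1$ forces one of the two series to be logarithmic, which is exactly the origin of the $\log nh$ factors in cases (ii)--(iv).

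The step I expect to be the main obstacle is justifying the application of Theorem \ref{Gaussianestimates}(i) to every summand when $\alpha\geq 1$. The bound $\|\mathcal{G}_{m,h}\|_r\lesssim (mh)^{-\alpha}$ requires $m>\alpha$, and for $m\leq\alpha$ one in fact has $\mathcal{G}_{m,h}\notin L^r$: from \eqref{Macdonald} and (P4) the origin singularity is $\mathcal{G}_{m,h}(x)\sim|x|^{2m-N}$, which fails to be $r$-integrable near $0$ precisely when $m\leq\alpha$. Hence the finitely many endpoint terms with $n-j+1\leq\alpha$ must be handled separately; for these indices $j\sim n$, so that $\|g(jh,\cdot)\|_q\lesssim (nh)^{-\gamma}$, and one expects their total contribution to be of order $(nh)^{-\gamma}$, which is dominated by the claimed rate in each of the four cases and therefore harmless. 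Making this last point rigorous is the delicate part: the naive $L^r\!*\!L^q\to L^p$ estimate is unavailable there, so one must decompose $\mathcal{G}_{m,h}$ into its compactly supported singular part and its rapidly decaying tail and treat each with the appropriate Young exponents, which is where the remaining technical care is concentrated.
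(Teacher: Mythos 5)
Your argument is, in its essentials, the paper's own proof: Young's inequality with $1+1/p=1/q+1/r$, the bound $\|\mathcal{G}_{m,h}\|_r\lesssim (mh)^{-\alpha}$ with $\alpha:=\frac{N}{2}(1/q-1/p)$ from Theorem \ref{Gaussianestimates}(i), the split of the resulting scalar sum at $j=[n/2]$, and the standard asymptotics of $\sum_{j\le m}j^{-\beta}$; the four cases then fall out exactly as in the paper, and your bookkeeping of the exponents is correct.

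The one point where you go beyond the paper is the worry about the summands with $n-j+1\leq\alpha$, which the paper applies Theorem \ref{Gaussianestimates}(i) to without comment. You are right that this is a real issue when $\alpha\geq 1$: for such $m$ one indeed has $\mathcal{G}_{m,h}\notin L^r$, as your computation from \eqref{Macdonald} and (P4) shows. However, your proposed repair is too optimistic. The obstruction is not merely that the $L^r$--$L^q$ Young inequality is unavailable for those terms; for $m\leq\alpha$ the operator $f\mapsto \mathcal{G}_{m,h}\ast f$ does not map $L^q(\R^N)$ into $L^p(\R^N)$ at all, since its local singularity $|x|^{2m-N}$ yields (by Young or Hardy--Littlewood--Sobolev) only $L^q\to L^s$ with $1/s=1/q-2m/N>1/p$. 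Hence the contribution of even the single term $j=n$, namely $\mathcal{G}_{1,h}\ast g(nh,\cdot)$, need not be finite in $L^p$ when $g(nh,\cdot)$ is assumed to lie only in $L^q$ and $\alpha>1$; splitting $\mathcal{G}_{m,h}$ into a compactly supported singular part and a decaying tail cannot rescue this, because it is precisely the singular part that fails. So your estimate ``their total contribution is of order $(nh)^{-\gamma}$'' cannot be established from the stated hypotheses. A genuine fix requires either restricting to $\frac{N}{2}(1/q-1/p)<1$ (where your argument, like the paper's, is complete because $m\geq 1>\alpha$ for every summand), or imposing extra integrability on $g(jh,\cdot)$ for the boundedly many indices with $n-j+1\leq\alpha$ (e.g.\ $g(jh,\cdot)\in L^q\cap L^{\tilde q}$ with $\frac{N}{2}(1/\tilde q-1/p)<1$), which would then give those terms a bound of order $(nh)^{-\gamma}$ as you anticipated. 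This is a defect of the theorem as stated, not of your strategy, but as written your proof of the endpoint terms does not close.
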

\begin{proof}
Let $r\geq 1$ such that $1+1/p=1/q+1/r.$ By Young's inequality and Theorem \ref{Gaussianestimates} (i) one gets \begin{eqnarray*}
\| u_p(nh) \|_{p}  &\leq& h\sum_{j=1}^n\|\mathcal{G}_{n-j+1,h}\|_r \|g(jh,\cdot))\|_q\lesssim  h\sum_{j=1}^n\frac{1}{((n-j+1)h)^{\frac{N}{2}(1/q-1/p)}}\frac{1}{(jh)^{\gamma}}\\
&=&h\biggl(\sum_{j=1}^{[n/2]}+ \sum_{j=[n/2]+1}^n\biggr)\frac{1}{((n-j+1)h)^{\frac{N}{2}(1/q-1/p)}}\frac{1}{(jh)^{\gamma}}=I_1+I_2.
\end{eqnarray*}

On one hand, for $1\leq j\leq [n/2]$ we have $n/2\leq n-j+1,$ which in turn implies that $$I_1\lesssim \frac{1}{(nh)^{\frac{N}{2}(1/q-1/p)}}\sum_{j=1}^{[n/2]}\frac{1}{(jh)^{\gamma}}\lesssim \frac{(nh)^{1-\min\{1,\gamma\}}}{(nh)^{\frac{N}{2}(1/q-1/p)}},\quad \gamma\neq 1,$$ and $I_1\lesssim  \frac{\log nh}{(nh)^{\frac{N}{2}(1/q-1/p)}}$ when $\gamma=1.$

On the other hand, $$I_2\lesssim \frac{1}{(nh)^{\gamma}}\sum_{j=1}^{n}\frac{1}{(jh)^{\frac{N}{2}(1/q-1/p)}}\lesssim \frac{(nh)^{1-\min\{1,\frac{N}{2}(1/q-1/p)\}}}{(nh)^{\gamma}},\quad \gamma\neq 1,$$ and $I_2\lesssim  \frac{\log nh}{(nh)^{\gamma}}$ when $\frac{N}{2}(1/q-1/p)=1.$

\end{proof}

\section{Large-time behaviour of solutions}\label{5}

In the following we study the asymptotic behavior of solution of \eqref{maineq2}, more precisely we will prove as the solution $u_c+u_p$ converges asymptotically to a lineal combination of the mass of the initial data $f$ and the mass of the non-homogeneity $g.$ Moreover, we will able to state the rate of the convergence. Along the section we will assume the following:

\begin{enumerate}
\item[(a)] $f\in L^1(\R^N)$.
\item[(b)] There exists $\gamma>\max\{1,\frac{N}{2}(1-1/p)\}$  such that
$$
\|g(jh,\cdot)\|_1 \lesssim \frac{1}{j^{\gamma}},\quad j\in \N.
$$
\end{enumerate}
Set also $$M_c =\int_{\R^N} f(x)\,dx, \quad M_p=\sum_{j=1}^{\infty}\int_{\R^N} g(jh,x)\,dx.$$

Taking into account the previous notation, we present the next theorem.

\begin{theorem}\label{Theorem5.1}
Let $1\leq p\leq \infty.$ Assume the conditions $(a)$-$(b),$ and suppose that $u$ is the classical solution of \eqref{maineq2}.

\begin{itemize}

 \item[(i)] Then $$(nh)^{\frac{N}{2}(1-\frac{1}{p})}\|u_c(nh)-M_c\mathcal{G}_{n,h}\|_{p}\to 0,\quad \mbox{as}\quad n\to\infty,$$ and $$(nh)^{\frac{N}{2}(1-\frac{1}{p})}\|u_p(nh)-hM_p\mathcal{G}_{n,h}\|_{p}\to 0,\quad \mbox{as}\quad n\to\infty.$$

 \item[(ii)] Suppose in addition that $|x|f\in L^1(\R),$ then $$ (nh)^{\frac{N}{2}(1-\frac{1}{p})}\|u_c(nh)-M_c\mathcal{G}_{n,h}\|_{p}\lesssim (nh)^{-1/2}.$$

\end{itemize}
\end{theorem}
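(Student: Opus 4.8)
The plan is to prove (ii) first by a first-order Taylor expansion of the kernel, then deduce (i) for $u_c$ by a density argument, and finally reduce the $u_p$ part of (i) to the $u_c$ part. Throughout set $\alpha:=\frac{N}{2}(1-\frac1p)$ and recall from Theorem~\ref{Gaussianestimates}, valid for $n$ large, the bounds $\|\mathcal{G}_{n,h}\|_p\lesssim(nh)^{-\alpha}$, $\|\nabla\mathcal{G}_{n,h}\|_p\lesssim(nh)^{-\alpha-1/2}$ and $\big\|\frac{\mathcal{G}_{n,h}-\mathcal{G}_{n-1,h}}{h}\big\|_p\lesssim(nh)^{-\alpha-1}$. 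For (ii), since $M_c=\int_{\R^N}f$, I would write
\[
u_c(nh,x)-M_c\mathcal{G}_{n,h}(x)=\int_{\R^N}\big(\mathcal{G}_{n,h}(x-y)-\mathcal{G}_{n,h}(x)\big)f(y)\,dy,
\]
and expand $\mathcal{G}_{n,h}(x-y)-\mathcal{G}_{n,h}(x)=-\int_0^1 y\cdot\nabla\mathcal{G}_{n,h}(x-sy)\,ds$ (valid for a.e.\ $x$, since $\mathcal{G}_{n,h}$ is smooth off the origin). Minkowski's integral inequality together with the translation invariance of $\|\cdot\|_p$ gives $\|u_c(nh)-M_c\mathcal{G}_{n,h}\|_p\le\|\nabla\mathcal{G}_{n,h}\|_p\,\||x|f\|_1$; multiplying by $(nh)^\alpha$ and using the gradient bound yields the rate $(nh)^{-1/2}$.

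For (i) applied to $u_c$ I would use density. Given $\varepsilon>0$ choose $f_\varepsilon$ with $|x|f_\varepsilon\in L^1(\R^N)$ and $\|f-f_\varepsilon\|_1<\varepsilon$ (compactly supported functions suffice), put $M_\varepsilon=\int_{\R^N}f_\varepsilon$, and split
\[
u_c(nh)-M_c\mathcal{G}_{n,h}=\big(\mathcal{G}_{n,h}*f_\varepsilon-M_\varepsilon\mathcal{G}_{n,h}\big)+\mathcal{G}_{n,h}*(f-f_\varepsilon)-(M_c-M_\varepsilon)\mathcal{G}_{n,h}.
\]
After multiplication by $(nh)^\alpha$, the first term tends to $0$ by (ii), while Young's inequality and $\|\mathcal{G}_{n,h}\|_p\lesssim(nh)^{-\alpha}$ bound the remaining two by $C_p\varepsilon$ uniformly in $n$ (note $|M_c-M_\varepsilon|\le\|f-f_\varepsilon\|_1$). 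Letting $\varepsilon\to0$ proves the claim.

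For the $u_p$ part I would reduce to the above by decomposing, with $m_j:=\int_{\R^N}g(jh,x)\,dx$,
\[
u_p(nh)-hM_p\mathcal{G}_{n,h}=h\sum_{j=1}^n\big(\mathcal{G}_{n-j+1,h}*g(jh,\cdot)-m_j\mathcal{G}_{n-j+1,h}\big)+h\sum_{j=1}^n m_j\big(\mathcal{G}_{n-j+1,h}-\mathcal{G}_{n,h}\big)-h\Big(\sum_{j>n}m_j\Big)\mathcal{G}_{n,h}.
\]
The tail term is handled by $\sum_{j>n}|m_j|\lesssim\sum_{j>n}j^{-\gamma}\lesssim n^{1-\gamma}\to0$ (using $\gamma>1$). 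For the middle term I would telescope $\mathcal{G}_{n,h}-\mathcal{G}_{n-j+1,h}$ into single steps and invoke Theorem~\ref{Gaussianestimates}(iii). For the first sum I would set $\eta_j(m):=(mh)^\alpha\|\mathcal{G}_{m,h}*g(jh,\cdot)-m_j\mathcal{G}_{m,h}\|_p$, which by the $u_c$ result of (i) satisfies $\eta_j(m)\to0$ as $m\to\infty$ and, crudely, $\eta_j(m)\lesssim\|g(jh,\cdot)\|_1\lesssim j^{-\gamma}$ uniformly in $m$.

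The decisive and most delicate step is this last one. Splitting each $j$-sum at $[n/2]$, the near part $j\le n/2$ has $n-j+1\to\infty$ and is dominated termwise by the summable sequence $Cj^{-\gamma}$, so dominated convergence for series forces it to $0$; the far part $j>n/2$ is estimated by the uniform bound and reduces to expressions of type $n^{\alpha-\gamma}\sum_{i\le n/2}i^{-\alpha}$, which tend to $0$ precisely because $\gamma>\max\{1,\alpha\}$ (the three regimes $\alpha<1$, $\alpha=1$, $\alpha>1$ each needing separate but routine checking). The same $\gamma>\max\{1,\alpha\}$ threshold governs the summability of the telescoped middle term, so hypothesis (b) is exactly what the argument consumes.
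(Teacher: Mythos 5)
Your proposal is essentially correct, and for the hardest piece it follows a genuinely different route from the paper. For part (ii) you replace the paper's use of the Duoandikoetxea--Zuazua decomposition lemma (Lemma~\ref{decolemma}), which writes $f=M_c\delta_0+\mathrm{div}\,\phi$ with $\|\phi\|_1\lesssim\||x|f\|_1$, by a first-order Taylor expansion of the kernel along the segment from $x-y$ to $x$ plus Minkowski's integral inequality; both yield the identical bound $\|\nabla\mathcal{G}_{n,h}\|_p\,\||x|f\|_1$, yours being more elementary and self-contained (with the usual caveat that $\mathcal{G}_{n,h}$ is only $C^1$ away from the origin for small $n$, harmless since the statement is asymptotic). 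Your density argument for the first half of (i) is the paper's argument, modulo the cosmetic difference that you do not normalize the mass of the approximants and instead absorb $|M_c-M_\varepsilon|\le\|f-f_\varepsilon\|_1$ into the error. The real divergence is the $u_p$ part: the paper proves it directly by decomposing $\{1,\dots,n\}\times\R^N$ into regions $\Omega_1,\Omega_2$, invoking the pointwise bounds of Theorem~\ref{Pointwiseestimates}, the mean value theorem in $x$ and a discrete mean value theorem in $n$ --- a long computation at the level of the kernel. You instead reduce the $u_p$ statement to the already-proved $u_c$ statement applied to each slice $g(jh,\cdot)$, plus a kernel-difference term and a tail term, and close the argument with a near/far splitting at $j=[n/2]$ and dominated convergence for series. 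This buys a shorter, more structural proof that uses only the $L^p$ bounds of Theorem~\ref{Gaussianestimates} and makes transparent that hypothesis (b), $\gamma>\max\{1,\tfrac N2(1-\tfrac1p)\}$, is exactly what is consumed; I checked the three regimes $\alpha<1$, $\alpha=1$, $\alpha>1$ in your far-part estimate and they do close. (Both your argument and the paper's share the same latent technicality for the finitely many indices with $n-j+1\le\alpha$, where $\|\mathcal{G}_{n-j+1,h}\|_p$ may be infinite; this is not a defect relative to the paper.)

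One step of your sketch needs to be stated more carefully: the middle term $h\sum_{j=1}^n m_j(\mathcal{G}_{n-j+1,h}-\mathcal{G}_{n,h})$. If you telescope for \emph{all} $j$ and then bound the far part $j>\lceil n/2\rceil$ only by $\sum_{j>n/2}|m_j|\lesssim n^{1-\gamma}$ times $\sum_{k\ge2}h(kh)^{-\alpha-1}=O(h^{-\alpha})$, you get $(nh)^{\alpha}\cdot h\cdot n^{1-\gamma}\cdot O(h^{-\alpha})\sim n^{1+\alpha-\gamma}$, which does not vanish unless $\gamma>1+\alpha$ --- strictly stronger than (b). The fix is within your own toolbox: on the far part do not telescope, but bound $\|\mathcal{G}_{n-j+1,h}-\mathcal{G}_{n,h}\|_p\lesssim((n-j+1)h)^{-\alpha}$ directly, which reduces to the same expression $n^{\alpha-\gamma}\sum_{i\le n/2}i^{-\alpha}$ you already control; reserve the telescoping (where all intermediate indices exceed $n/2$, so Theorem~\ref{Gaussianestimates}(iii) gives the full gain $(nh)^{-\alpha-1}$ per step) for the near part $j\le\lceil n/2\rceil$, where $\sum_{j\le n/2}(j-1)|m_j|\lesssim\sum_j j^{1-\gamma}$ combined with the factor $n^{-1}$ tends to zero for every $\gamma>1$. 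With that adjustment the argument is complete.
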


\begin{proof}
We start proving the assertion $(ii)$. Since $f, |x|f\in L^1(\R^N),$
by  Decomposition Lemma \ref{decolemma} there exists $\phi\in L^1(\mathbb{R}^N;\mathbb{R}^N)$ such that
$$f=M_c\delta_0+\mbox{div}\, \phi$$
in the distributional sense, and
$$\|\phi\|_{1}\leq C\int_{\mathbb{R}^N}|x||f(x)|\,dx<\infty.$$
Then
\begin{align*}
u_c(nh,x)&=(\mathcal{G}_{n,h}\ast (M_c\delta_0+\mbox{div}\, \phi(\cdot)))(x)\\
&=M_c\mathcal{G}_{n,h}(x)+(\nabla\mathcal{G}_{n,h}\ast \phi)(x),
\end{align*}
which implies
\begin{equation*}
\|u_c(nh)-M_c\mathcal{G}_{n,h}\|_{p}\leq C\|\nabla\mathcal{G}_{n,h}\|_{p}\||x|f(x)\|_{1}
\leq K_{p,f}\dfrac{1}{(nh)^{\frac{N}{2}(1-1/p)+1/2}},
\end{equation*}
where we have used part $(ii)$ of Theorem \ref{Gaussianestimates}.
This implies
\begin{equation}\label{mainineq}
\|\mathcal{G}_{n,h}\ast f-M_c\mathcal{G}_{n,h}\|_{p}\leq
K_{p,f}\dfrac{1}{(nh)^{\frac{N}{2}(1-1/p)+\frac{1}{2}}}.
\end{equation}

To prove the first part of assertion $(i)$, we choose a sequence $(\eta_j)\subset C_0^{\infty}(\mathbb{R}^N)$ such that  $\int_{\mathbb{R}^N}\eta_j(x)\,dx=M_c$ for all $j,$ and $\eta_j\to f$ in $L^1(\mathbb{R}^N)$. For each $j$,  by Theorem \ref{Gaussianestimates} and \eqref{mainineq}  we get
\begin{align*}
\|u_c(nh)-M_c\mathcal{G}_{n,h}\|_{p}&=
\|\mathcal{G}_{nh}\ast f-M_c\mathcal{G}_{n,h}\|_{p}\\
&\leq \|\mathcal{G}_{n,h}\ast(f-\eta_j)\|_{p}+\|\mathcal{G}_{n,h}\ast \eta_j-M_c\mathcal{G}_{n,h}\|_{p}\\
&\leq \|\mathcal{G}_{n,h}\|_{p}\|f-\eta_j\|_{1}+\|\mathcal{G}_{n,h}\ast \eta_j-M_c\mathcal{G}_{n,h}\|_{p}\\
&\leq C_p\dfrac{1}{(nh)^{\frac{N}{2}(1-1/p)}}\|f-\eta_j\|_{1}+K_{p,\eta_j}\dfrac{1}{(nh)^{\frac{N}{2}(1-1/p)+\frac{1}{2}}}.
\end{align*}
It follows that
$$
(nh)^{\frac{N}{2}(1-1/p)}
\|u_c(nh)-M_c\mathcal{G}_{n,h}\|_{p}\leq
C_p\|f-\eta_j\|_{1}+K_{p,\eta_j}\frac{1}{(nh)^{\frac{1}{2}}},
$$
which implies
$$\limsup_{n\to\infty}\,(nh)^{\frac{N}{2}(1-1/p)}
\|u_c(nh)-M_c\mathcal{G}_{n,h}\|_{p}\leq
C_p\|f-\eta_j\|_{1}.$$
The assertion follows by letting $j\to\infty$.

Next, let us prove the second part of $(i)$. We can write

$$M_p=\sum_{j=1}^{n}\int_{\R^N} g(jh,x)+\sum_{j=n+1}^{\infty}\int_{\R^N} g(jh,x)\,dx.$$
It follows from Theorem \ref{Gaussianestimates} that
\begin{align*}
&(nh)^{\frac{N}{2}(1-\frac{1}{p})}\left\|\mathcal{G}_{n,h}(\cdot)\sum_{j=n+1}^{\infty}\int_{\R^N} g(jh,x)\,dx\right\|_{p}\\
&\leq (nh)^{\frac{N}{2}(1-\frac{1}{p})}\|\mathcal{G}_{n,h}(\cdot)\|_{p}
\sum_{j=n+1}^{\infty}\int_{\R^N} |g(jh,x)|\,dx\to0,\quad n\to\infty.
\end{align*}
Therefore it is enough to show the following
$$(nh)^{\frac{N}{2}(1-\frac{1}{p})}\left\|h\sum_{j=1}^{n}(\mathcal{G}_{n-j+1,h}\ast g(jh,\cdot))(\cdot)-h\mathcal{G}_{n,h}(\cdot)\sum_{j=1}^{n}\int_{\R^N} g(jh,y)\,dy\right\|_{p}\to0,\,\,\,n\to\infty.$$

Ir order to prove the assertion, we fix
$0<\delta<\frac{1}{10}.$
In particular, this implies that $0<\delta<\dfrac{1}{5}<\dfrac{1}{2}$ and
$$
\frac{\delta}{1-\delta}<\frac{1}{4}.
$$
Next, we decompose the set $\{1,2,3,..,n\}\times \mathbb{R}^N$ into two parts
$$\Omega_1:=\{1,2,...,\lceil n\delta \rceil\}\times \{y\in\mathbb{R}^N:|y|\leq (\delta nh)^{1/2}\},$$
$$\Omega_2:=\{1,2,...,n\}\times \mathbb{R}^N\setminus \Omega_1.$$

Let us start with the set $\Omega_1$.  By the integral form of the Minkowski inequality we get
\begin{align*}
&\left\|\sum_{(j,y)\in \Omega_1}\int(h \mathcal{G}_{n-j+1,h}(\cdot-y)-h \mathcal{G}_{n,h}(\cdot))g(jh,y)dy\right\|_{p}\\
&\leq h\sum_{(j,y)\in \Omega_1}\int \| \mathcal{G}_{n-j+1,h}(\cdot-y)- \mathcal{G}_{n,h}(\cdot)\|_{p}|g(jh,y)|dy.
\end{align*}

Note that in this set the following inequalities hold
\begin{equation}\label{inequalities}
n\geq n-j+1\geq n(1-\delta)>\frac{n}{2},
\end{equation}
where the second inequality follows from $\delta n-\lceil \delta n \rceil\geq -1.$ Now, when $(j,y)\in \Omega_1,$ we consider the following subsets over $\R^N$
$$A=\{x\in\R^N\,:\, |x-y|\leq 2(\delta nh)^{1/2}\},\qquad B:=\{x\in\R^n\,:\,|x-y|> 2(\delta nh)^{1/2}\},$$
and we write the $p$-norm over $\Omega_1$ in the following way \begin{eqnarray*}
\| \mathcal{G}_{n-j+1,h}(\cdot-y)- \mathcal{G}_{n,h}(\cdot)\|_{p}&\leq&\biggl(\int_A | \mathcal{G}_{n-j+1,h}(x-y)- \mathcal{G}_{n,h}(x)|^p \,dx \biggr)^{1/p}\\
&&+\biggl(\int_B | \mathcal{G}_{n-j+1,h}(x-y)- \mathcal{G}_{n,h}(x)|^p \,dx \biggr)^{1/p}.
\end{eqnarray*}

Let us estimate on $\Omega_1$ the part $A$ of the $p$-norm.  First we write \begin{align*}
\biggl( \int_A |\mathcal{G}_{n-j+1,h}(x-y)- \mathcal{G}_{n,h}(x)| \,dx\biggr)^{1/p}&\leq
\biggl(\int_A|  \mathcal{G}_{n-j+1,h}(x-y)|^p\,dx\biggr)^{1/p}\\
&+\biggl(\int_A| \mathcal{G}_{n,h}(x)|^p\,dx\biggr)^{1/p}\\
&=:I_1+I_2.
\end{align*}
For $(j,y)\in \Omega_1$  and $x\in A$ we have that
\begin{align*}
\frac{|x-y|^2}{(n-j+1)h}\leq \frac{4\delta}{(1-\delta)}<1.
\end{align*}
Since we want to estimate the solution for large values of $n$, we can assume that $n>N$. Thus, \eqref{inequalities} implies that $n-j+1>n/2>N/2.$
It follows from Theorem \ref{Pointwiseestimates} (i)
that
$$| \mathcal{G}_{n-j+1,h}(x-y)|\sim \frac{1}{((n-j+1)h)^{N/2}}.$$
Then
\begin{align*}
(nh)^{\frac{N}{2}(1-\frac{1}{p})} h I_1&\lesssim \frac{(nh)^{\frac{N}{2}(1-\frac{1}{p})}h}{((n-j+1)h)^{N/2}}\biggl(\int_A \, dx\biggr)^{1/p}\\
&=\frac{C_p (nh)^{\frac{N}{2}(1-\frac{1}{p})} h (\delta nh)^{N/2p}}{((n-j+1)h)^{N/2}}\leq C_p h \delta^{N/2p},
\end{align*}
where in the last inequality we have used \eqref{inequalities}.  Analogously, for  $(j,y)\in \Omega_1$ and $x\in A$ we have
$$\frac{|x|^2}{nh}\leq \frac{(|x-y|+|y|)^2}{nh}\leq 9\delta<1,$$
which implies that
$$|\mathcal{G}_{n,h}(x)|\sim \frac{1}{(nh)^{N/2}}.$$
Therefore
\begin{align*}
(nh)^{\frac{N}{2}(1-\frac{1}{p})}hI_2\lesssim C_p h \delta^{N/2p}.
\end{align*}
Since $\sum_{j=1}^{\infty}\|g(jh,\cdot)\|<\infty,$ we get
$$
(nh)^{N/2(1-1/p)}h\sum_{(j,y)\in \Omega_1}\int \biggl(\int_A | \mathcal{G}_{n-j+1,h}(x-y)- \mathcal{G}_{n,h}(x)|^p \,dx \biggr)^{1/p}|g(jh,y)|\,dy\leq C_p h\delta^{N/2p}\to0,\quad \delta\to0.
$$

Now we consider on $\Omega_1$ the part $B$ of the $p$-norm. We write \begin{align*}
\biggl( \int_B |\mathcal{G}_{n-j+1,h}(x-y)- \mathcal{G}_{n,h}(x)|^p \,dx\biggr)^{1/p}&\leq
\biggl( \int_B |  \mathcal{G}_{n-j+1,h}(x-y)- \mathcal{G}_{n-j+1,h}(x)|^p\,dx\biggr)^{1/p}\\
&+\biggl( \int_B| \mathcal{G}_{n-j+1,h}(x)- \mathcal{G}_{n,h}(x)|^p\,dx\biggr)^{1/p}\\
&=:I_3+I_4.
\end{align*}
First, let us estimate $I_3$. By mean value theorem there exists $\tilde{x}$ between $x-y$ and $x$ ($x$ denote the integration variable) such that
\begin{equation*}\label{I3}I_3= |y| \biggl( \int_B |  \nabla\mathcal{G}_{n-j+1,h}(\tilde{x})|^p\,dx\biggr)^{1/p}.\end{equation*}
Since $|y|\leq (\delta nh)^{1/2}<\frac{1}{2}|x-y|$ then
\begin{equation}\label{ine}|\tilde{x}|\geq|x-y|-|\tilde{x}-(x-y)|\geq |x-y|-|y|\geq \frac{|x-y|}{2},\end{equation}
and \begin{equation}\label{ine2}|\tilde{x}|\leq |x-y|+|y|\leq|x-y|+\frac{|x-y|}{2}=\frac{3}{2}|x-y|.\end{equation} Equations \eqref{ine} and \eqref{ine2} show that $|\tilde{x}|$ and $|x-y|$ are comparable. Also, by \eqref{inequalities} and \eqref{ine} we obtain
\begin{equation}\label{tildex}
\frac{|\tilde{x}|}{((n-j+1)h)^{1/2}}\geq \frac{|x-y|}{2((n-j+1)h)^{1/2}}> \delta^{1/2}.
\end{equation}
Now we will use the asymptotics of $|  \nabla\mathcal{G}_{n-j+1,h}(\tilde{x})|,$ so we divide $I_3$ in two parts, $I_{31}$ and $I_{32}$ depending on whether $\frac{|\tilde{x}|}{((n-j+1)h)^{1/2}}$ is less or greater than 1 respectively (we are assuming $\delta$ enough small).

In $I_{32}$, when $\frac{|\tilde{x}|}{((n-j+1)h)^{1/2}}\geq1$, by \eqref{ine2} one gets
$$((n-j+1)h)^{1/2}\leq|\tilde{x}|\leq \frac{3}{2}|x-y|.$$
For this reason, the integration region in $I_{32}$ is contained in $\{x\in\R^N\,:\,\frac{2}{3}((n-j+1)h)^{1/2}\leq |x-y|\}$. From Theorem \ref{Pointwiseestimates} (ii), the fact that $|y|\leq (\delta nh)^{1/2},$ \eqref{inequalities} and \eqref{ine}, we have
\begin{align*}
I_{32}&\leq C(\delta nh)^{1/2}\left(\int_{|x-y|
\geq \frac{2}{3}((n-j+1)h)^{1/2}}\left(\frac{(n-j+1)h}{|\tilde{x}|^{N+3}}\right)^p dx\right)^{1/p}\\
&\leq C(\delta nh)^{1/2}\left(\int_{|x-y|
\geq \frac{2}{3}((n-j+1)h)^{1/2}}\left(\frac{nh}{|x-y|^{N+3}}\right)^p dx\right)^{1/p}\\
&= C_p\delta ^{1/2}(nh)^{3/2}((n-j+1)h)^{N/(2p)-(N+3)/2}\\
&\leq C_p\delta^{1/2}(nh)^{-N/2(1-1/p)}.
\end{align*}
Consequently,
$$(nh)^{N/2(1-1/p)}hI_{32}\leq C_ph\delta^{1/2}.$$

For $I_{31},$ by \eqref{tildex} note that the set of integration is contained in $\{x\in\R^N\,:\, 1\geq \frac{|x-y|}{2((n-j+1)h)^{1/2}}> \delta^{1/2} \}.$ Then from Theorem \ref{Pointwiseestimates} (ii) it follows
\begin{align*}
I_{31}&\leq C(\delta nh)^{1/2}\left(\int_{\delta^{1/2}
\leq \frac{|x-y|}{((n-j+1)h)^{1/2}}\leq 1}\left(\frac{|\tilde{x}|}{((n-j+1)h)^{N/2+1}}\right)^p\,dx\right)^{1/p}\\
&\leq C \frac{\delta^{1/2}(nh)^{1/2}}{(nh)^{N/2+1}}\left(\int_{\delta^{1/2}
\leq \frac{|x-y|}{((n-j+1)h)^{1/2}}\leq1}|x-y|^p\,dx\right)^{1/p}\\
&\leq C_p \delta^{1/2}(nh)^{-N/2(1-1/p)}(1-\delta^{(N+p)/2})^{1/p}\\
&\leq C_p \delta^{1/2}(nh)^{-N/2(1-1/p)},
\end{align*}
which is equivalent to
$$(nh)^{N/2(1-1/p)}hI_{31}\leq C_ph\delta^{1/2}.$$

Next, let us estimate $I_4$.  From discrete mean value theorem (see \cite[Corollary 2]{AP}), there exist $\tilde{n}\in\{n-j+2,...,n\}$ (whenever $j\geq 2$) and $C>0$ such that
\begin{equation}\label{I4}
I_4\leq C(j-1)h\left( \int_{B}|\mathcal{G}_{\tilde{n},h}(x)-\mathcal{G}_{\tilde{n}-1,h}(x)|\,dx\right)^p=C(j-1)h\left( \int_{B}|\Delta \mathcal{G}_{\tilde{n},h}(x)|\,dx\right)^p.
\end{equation}
Recall that in $\Omega_1$ we have $n-j+1\leq \tilde{n}\leq n,$ which implies by \eqref{inequalities} that $nh(1-\delta)\leq \tilde{n}h\leq nh.$ Also, in $\Omega_1$ and $B$ we have
$$
|x|=|x+y-y|\geq |x-y|-|y|\geq (\delta nh)^{1/2},
$$
so
$$
\tilde{z}:=\frac{|x|}{(\tilde{n}h)^{1/2}}\geq \frac{|x|}{(nh)^{1/2}}\geq \frac{(\delta nh)^{1/2}}{(nh)^{1/2}}=\delta^{1/2},
$$
and we have again two cases. We denote by $I_{41}$ and $I_{42}$ depending on whether $\tilde{z}\leq 1$ or $\tilde{z}\geq 1$ on the right side of \eqref{I4}.

For $I_{41},$ since $\tilde{z}\leq 1$ and $|x|\geq (\delta nh)^{1/2}$ the set of integration is contained in $\{ x\in\R^N\,:\, (\delta nh)^{1/2}\leq |x|\leq (nh)^{1/2}\}.$ Then, from Theorem \ref{Pointwiseestimates} (iii) and the fact that we are in $\Omega_1$, we have
\begin{align*}
I_{41}&\leq C(j-1)h\left(\int_{(\delta nh)^{1/2}
\leq |x|\leq (nh)^{1/2}}\frac{1}{(\tilde{n}h)^{(N/2+1)p}}\,dx\right)^{1/p}\\
&\leq \frac{C\delta nh}{(nh(1-\delta))^{N/2+1}}\left(\int_{(\delta nh)^{1/2}
\leq |x|\leq (nh)^{1/2}}dx\right)^{1/p}\\
&= \frac{C_p\delta(1-\delta^{N/2})^{1/p}(nh)^{-N/2(1-1/p)}}{(1-\delta)^{N/2+1}}.
\end{align*}
Consequently,
$$
(nh)^{N/2(1-1/p)}hI_{41}\leq \frac{C_p h\delta}{(1-\delta)^{N/2+1}}.
$$

For $I_{42}$ we have
$$
1\leq \tilde{z}=\frac{|x|}{(\tilde{n}h)^{1/2}}\leq \frac{|x|}{(nh(1-\delta))^{1/2}},
$$
which implies that the set of integration is contained in $\{ x\in\R^N\,:\, |x|\geq (nh(1-\delta))^{1/2}\}.$ Then
\begin{align*}
I_{42}&\leq  C(j-1)h \left(\int_{|x|\geq((1-\delta)nh)^{1/2}}\frac{1}{|x|^{(N+2)p}}\,dx\right)^{1/p}\\
&= \frac{C_p\delta (nh)^{-N/2(1-1/p)}}{(1-\delta)^{N/2(1-1/p)+1}}.
\end{align*}
Consequently,
$$
(nh)^{1/2(1-1/p)}hI_{42}\leq \frac{C_p h\delta}{(1-\delta)^{N/2(1-1/p)+1}}.
$$

Collecting all above terms over $B$ we get
\begin{align*}
&(nh)^{N/2(1-1/p)}h\sum_{(j,y)\in \Omega_1}\int \biggl( \int_B |\mathcal{G}_{n-j+1,h}(x-y)- \mathcal{G}_{n,h}(x)|^p \,dx\biggr)^{1/p} |g(jh,y)|dy\\
\nonumber &\leq C_p\delta^{\eta}\sum_{j=1}^n\int_{\mathbb{R}^N}|g(jh,y)|dy
\end{align*}
for some positive number $\eta$. The upper bound tends to zero as $\delta\to 0$ uniformly in $nh$.

Now, we consider the set $\Omega_2$.  Then

\begin{align*}
&(nh)^{N/2(1-1/p)}h\sum_{(j,y)\in \Omega_2}\int \|\mathcal{G}_{n-j+1,h}(\cdot-y)-\mathcal{G}_{n,h}(\cdot)\|_p|g(jh,y)|dy\\
&\leq
(nh)^{N/2(1-1/p)}h\sum_{(j,y)\in \Omega_2}\int \|\mathcal{G}_{n-j+1,h}(\cdot-y)\|_p|g(jh,y)|dy\\
&+(nh)^{N/2(1-1/p)}h\sum_{(j,y)\in \Omega_2}\int \|\mathcal{G}_{n,h}(\cdot)\|_p|g(jh,y)|dy\\
&=:I_5+I_6.
\end{align*}
By Theorem \ref{Gaussianestimates} (i) one gets
$$
I_6\leq C_p \sum_{(j,y)\in \Omega_2}\int|g(jh,y)|dy.
$$
As $n\to\infty$, $\Omega_1\to \mathbb{N}\times \mathbb{R}^N$. This implies that $\Omega_2$ has measure zero, and since $\sum_{j=1}^{\infty}\int_{\mathbb{R}^N}|g(jh,y)|dy<\infty,$ then $\sum_{(j,y)\in \Omega_2}\int|g(jh,y)|dy\to 0$ as $n\to\infty$.  It follows that $I_6\to 0$ as $n\to\infty$.

For $I_5$ we have two possibilities: either $j\leq \lceil \delta n\rceil$ or $j> \lceil \delta n\rceil$. Thus, we divide
$$
\Omega_2=\{1,...,\lceil \delta n\rceil\}\times \{y\in \R:|y|>(\delta nh)^{1/2}\}
\cup\{\lceil \delta n\rceil+1,...,n\}\times \mathbb{R}^N.
$$
Then
\begin{align*}
I_5&\leq (nh)^{N/2(1-1/p)}h\sum_{j=1}^{\lceil \delta n\rceil}\int_{|y|>(\delta nh)^{1/2}} \|\mathcal{G}_{n-j+1,h}(\cdot-y)\|_p |g(jh,y)|dy\\
&+(nh)^{N/2(1-1/p)}h\sum_{j=\lceil \delta n\rceil+1}^{n}\int_{\mathbb{R}^N} \|\mathcal{G}_{n-j+1,h}(\cdot-y)\|_p |g(jh,y)|dy\\
&=:I_{51}+I_{52}.
\end{align*}
Let us start with $I_{51}$.  Recall that for $j\in \{1,...,\lceil \delta n\rceil\}$ the expression \eqref{inequalities} holds. So, by Theorem \ref{Gaussianestimates} (i) we have that
\begin{align*}
I_{51}&\leq C_p h \sum_{j=1}^{\lceil \delta n\rceil}\int_{|y|>(\delta nh)^{1/2}}|g(jh,y)|dy\to 0
\end{align*}
as $n\to\infty.$

Next, for $I_{52},$ again by Theorem \ref{Gaussianestimates} (i), \eqref{inequalities} and the fact that $\gamma>1$ we obtain
\begin{align*}
I_{52}&\leq C_p(nh)^{N/2(1-1/p)}h\sum_{j=\lceil \delta n\rceil+1}^{n}
\dfrac{1}{((n-j+1)h)^{\frac{N}{2}(1-1/p)}}\frac{1}{j^{\gamma}}.
\end{align*}
Thus, if $\frac{N}{2}(1-1/p)\in[0,1)$ and $\gamma>1,$ then $$I_{52}\leq \frac{C_p(nh)^{N/2(1-1/p)}h}{(\lceil \delta n\rceil+1)^{\gamma}h^{\gamma}}\sum_{j=1}^{n-\lceil \delta n\rceil}\frac{1}{(jh)^{\frac{N}{2}(1-1/p)}}\leq \frac{C_ph (nh)^{N/2(1-1/p)}(nh-\lceil \delta n\rceil h)^{1-N/2(1-1/p)}}{(\delta n h)^{\gamma}}\to 0,$$ as $n\to\infty.$ Also, if $\gamma>\frac{N}{2}(1-1/p)>1$,  then $$I_{52}\leq \frac{C_p(nh)^{N/2(1-1/p)}h}{(\lceil \delta n\rceil+1)^{\gamma}h^{\gamma}}\sum_{j=1}^{n-\lceil \delta n\rceil}\frac{1}{(jh)^{\frac{N}{2}(1-1/p)}}\leq \frac{C_ph (nh)^{N/2(1-1/p)}}{(\delta n h)^{\gamma}}\to 0,\quad n\to\infty.$$ The case $\gamma>\frac{N}{2}(1-1/p)=1$ implies, similarly to the previous one, that  $$I_{52}\leq \frac{C_ph (nh)\log(n)}{(\delta n h)^{\gamma}}\to 0,\quad n\to\infty.$$

\end{proof}

\section{Optimal $L^2$-decay for solutions}\label{6}
In this section we prove that the decay rate of the solution $u_c$ of \eqref{maineq1} given in Theorem \ref{teorema4.1} (i) is optimal.
\begin{theorem}
Let $u_c$ be the solution of \eqref{maineq1}.  Assume that $f\in L^1(\R^N)\cap L^2(\R^N)$ and $\int_{\R^N}f(x)dx\neq 0$. Then
$$
\|u_c(nh,\cdot)\|_2 \sim \frac{C}{(nh)^{N/4}},\quad nh\geq 1.
$$
\end{theorem}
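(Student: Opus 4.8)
The plan is to separate the statement into the upper and lower bounds and to observe that the upper bound is essentially free. Indeed, $u_c(nh)=\mathcal{G}_{n,h}\ast f$ with $f\in L^1(\R^N)$, so taking $q=1$ and $p=2$ in Theorem \ref{teorema4.1}(i) (the exponent being $\frac{N}{2}(1-\tfrac12)=\frac{N}{4}$) gives directly
\[
\|u_c(nh)\|_2\leq C\,(nh)^{-N/4}\|f\|_1 .
\]
Thus the whole content of the theorem is the matching lower bound, which I would obtain by Fourier analysis.

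By Plancherel's identity together with Proposition \ref{HeatProperties}(iii), which gives $\mathcal{F}(\mathcal{G}_{n,h})(\xi)=(1+h|\xi|^2)^{-n}$, I would write
\[
\|u_c(nh)\|_2^2=\bigl\|\mathcal{F}(\mathcal{G}_{n,h})\,\mathcal{F}(f)\bigr\|_2^2
=c_N\int_{\R^N}\frac{|\mathcal{F}(f)(\xi)|^2}{(1+h|\xi|^2)^{2n}}\,d\xi,
\]
where $c_N$ is the Plancherel constant of the chosen normalization. The natural rescaling is $\xi=\eta/\sqrt{nh}$, under which $h|\xi|^2=|\eta|^2/n$ and $d\xi=(nh)^{-N/2}\,d\eta$, so that
\[
(nh)^{N/2}\|u_c(nh)\|_2^2
=c_N\int_{\R^N}\frac{|\mathcal{F}(f)(\eta/\sqrt{nh})|^2}{\bigl(1+|\eta|^2/n\bigr)^{2n}}\,d\eta .
\]
This isolates exactly the rate $(nh)^{-N/4}$ and reduces the problem to showing that the right-hand side stays bounded away from zero.

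The key step is to identify the limit of the integrand. Since $f\in L^1(\R^N)$, the transform $\mathcal{F}(f)$ is continuous and bounded with $\mathcal{F}(f)(0)=\int_{\R^N}f=M_c\neq0$, so $\mathcal{F}(f)(\eta/\sqrt{nh})\to M_c$ pointwise as $n\to\infty$, while $\bigl(1+|\eta|^2/n\bigr)^{2n}\uparrow e^{2|\eta|^2}$. Hence the integrand converges pointwise to $|M_c|^2e^{-2|\eta|^2}$. For the lower bound alone, Fatou's lemma already yields
\[
\liminf_{n\to\infty}(nh)^{N/2}\|u_c(nh)\|_2^2
\geq c_N|M_c|^2\int_{\R^N}e^{-2|\eta|^2}\,d\eta
=c_N|M_c|^2(\pi/2)^{N/2}>0 ,
\]
the positivity being precisely where the hypothesis $\int f\neq0$ is used. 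To upgrade this to the genuine equivalence with an explicit constant $C$, I would pass the limit through the integral by dominated convergence: the numerator is bounded by $\|f\|_1^2$, and because $n\mapsto(1+|\eta|^2/n)^{n}$ is increasing, for any fixed integer $n_0>N/4$ and all $n\geq n_0$ the integrand is dominated by $\|f\|_1^2\,(1+|\eta|^2/n_0)^{-2n_0}$, which is integrable since its decay $\sim|\eta|^{-4n_0}$ beats $|\eta|^{N-1}$. This gives $\lim_{n\to\infty}(nh)^{N/2}\|u_c(nh)\|_2^2=c_N|M_c|^2(\pi/2)^{N/2}$, hence $\|u_c(nh)\|_2\sim C(nh)^{-N/4}$ with $C=\sqrt{c_N}\,|M_c|(\pi/2)^{N/4}$.

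The main obstacle is the justification of the interchange of limit and integral, i.e. producing a uniform integrable majorant for $(1+|\eta|^2/n)^{-2n}$; the naive Bernoulli estimate $(1+|\eta|^2/n)^{2n}\geq(1+|\eta|^2)^2$ only yields an integrable bound for $N<4$, so the monotonicity observation above (equivalently a binomial bound $(1+|\eta|^2/n)^{2n}\gtrsim|\eta|^{2k}$ with $k>N/2$ on $\{|\eta|>1\}$) is what makes the argument work in all dimensions. I emphasize that if one only wants the comparability $\|u_c(nh)\|_2\sim(nh)^{-N/4}$ rather than the precise limiting constant, the Fatou estimate already suffices and this obstacle disappears. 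Finally, the equivalence extends to every $n$ with $nh\geq1$ after adjusting constants, since $\mathcal{F}(f)(0)=M_c\neq0$ forces $\mathcal{F}(f)\not\equiv0$ and hence $u_c(nh)\neq0$ for each fixed $n$.
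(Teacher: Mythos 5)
Your proof is correct, and it takes a genuinely different route from the paper's in both halves. For the upper bound the paper re-derives the estimate on the Fourier side, bounding $(1+h|\xi|^2)^{2n}\geq(1+nh|\xi|^2)^2$ and reducing to $\int_{\R^N}(1+|\xi|^2)^{-2}\,d\xi$ --- an integral that is finite only for $N\leq 3$, so the paper's written argument is incomplete in higher dimensions; your choice to quote Theorem \ref{teorema4.1}(i) with $q=1$, $p=2$ (equivalently, your monotone majorant $(1+|\eta|^2/n_0)^{-2n_0}$ with $n_0>N/4$) sidesteps exactly this issue and works for all $N$, a point you correctly flag as the main obstacle. For the lower bound the paper does not rescale: it restricts the Plancherel integral to the ball of radius $\rho=\rho_0/(nh)^{1/2}$, invokes the Lebesgue differentiation theorem to secure $\rho^{-N}\int_{\mathcal{B}(0,\rho)}|\mathcal{F}f|^2\,d\xi\geq\tfrac12|\mathcal{F}f(0)|^2$ for small $\rho$, and uses $(1+\rho_0^2/n)^{2n}\leq e^{2\rho_0^2}$; you instead substitute $\xi=\eta/\sqrt{nh}$ in the full integral and apply Fatou (then dominated convergence). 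Both arguments use the hypothesis $\int_{\R^N}f\neq0$ at the same place and give the same rate $(nh)^{-N/4}$; yours buys the exact limiting constant $c_N|M_c|^2(\pi/2)^{N/2}$ and needs only continuity of $\mathcal{F}f$ at the origin, while the paper's shrinking-ball argument avoids any discussion of dominating functions. One caveat common to both proofs: the $L^2$ bound on $\mathcal{G}_{n,h}$ requires $n>N/4$, so for the finitely many $n$ with $nh\geq1$ below that threshold the two-sided comparability must be absorbed into the constants, as you note at the end.
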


\begin{proof}
Let $\rho>0,$ we have by Proposition \ref{HeatProperties} (iii) that
\begin{align}\label{5.1}
\nonumber\|u_c(nh,\cdot)\|_2^2&=\|\mathcal{F}u_c(n,\cdot)\|_2^2=\int_{\R^N}|\mathcal{F}\mathcal{G}_{n,h}(\xi)|^{2}\,\,|\mathcal{F}f(\xi)|^2\,d\,\xi\\
\nonumber&\geq \int_{\mathcal{B}(0,\rho)}\frac{1}{(1+h|\xi|^{2})^{2n}}|\mathcal{F}f(\xi)|^2d\,\xi\\
&\geq \frac{1}{(1+h|\rho|^{2})^{2n}}\int_{\mathcal{B}(0,\rho)}|\mathcal{F}f(\xi)|^2d\,\xi.
\end{align}
By Plancherel Theorem and the Riemann-Lebesgue Lemma we have that $\mathcal{F}f\in C_0(\R^N)\cap L^2(\R^N)$. By the Lebesgue differentiation theorem, we may choose $\rho_0$ small enough such that
$$
\frac{1}{\rho^N}\int_{\mathcal{B}(0,\rho)}|\mathcal{F}f(\xi)|^2d\,\xi\geq \frac{1}{2}\left|\mathcal{F}(0)\right|^2\quad
\mbox{for all}\quad \rho\in(0,\rho_0].
$$

Substituting the previous inequality in \eqref{5.1} we have that for all $\rho\in(0,\rho_0]$
$$
\|u_c(nh,\cdot)\|_2^2\geq \frac{\rho^N}{2(1+h|\rho|^{2})^{2n}}\left|\mathcal{F}(0)\right|^2.
$$
We choose $\rho:=\dfrac{\rho_0}{(nh)^{1/2}}$. For $n$ enough large, $nh\geq 1$ then $\rho$ belongs to $(0,\rho_0)$.  Hence
\begin{align*}
\frac{\rho^N}{(1+h|\rho|^{2})^{2n}}&=\frac{\rho_0^N}{(nh)^{N/2}\left(1+\dfrac{\rho_0^2}{n}\right)^{2n}}\\
&\geq \frac{\rho_0^N}{(nh)^{N/2}e^{2\rho_0^2}}=\frac{C}{(nh)^{N/2}},\quad nh\geq 1,
\end{align*}
and then we get the first assertion of the result.

Next, let us prove the upper bound.  By Plancherel's Theorem and the Riemann-Lebesgue Lemma  we have
\begin{align*}
\|u_c(nh,\cdot)\|_2^2&=\int_{\R^N}\frac{1}{(1+h|\xi|^{2})^{2n}}|\mathcal{F}f(\xi)|^2d\,\xi\\
&\leq \|\mathcal{F}f\|_{\infty}^2\int_{\R^N}\frac{1}{(1+h|\xi|^{2})^{2n}}d\,\xi\leq \|f\|_{1}^2\int_{\R^N}\frac{1}{(1+nh|\xi|^{2})^2}d\,\xi\\
&= \frac{C \|f\|_{1}^2}{(nh)^{N/2}}\int_{\R^N}\frac{1}{(1+|\xi|^{2})^2}d\,\xi= \frac{C}{(nh)^{N/2}}.
\end{align*}

\end{proof}

\section{Appendix}\label{Appendix}\label{7}

Here, we present some useful facts which are needed in order to obtain our results.\\

First, we recall the following asymptotic behavior of the Gamma function.  Let $\alpha, z\in\C$, then
\begin{equation}\label{GammaAsymp}
\frac{\Gamma(z+\alpha)}{\Gamma(z)}=z^{\alpha}\biggl(   1+\frac{\alpha(\alpha+1)}{2z}+O(|z|^{-2}) \biggr),\quad |z|\to\infty,\end{equation}
whenever $z\neq 0,-1,-2,\ldots,$ and $z\neq -\alpha,-\alpha-1,\ldots,$ see \cite{ET}.

Next, we recall the definition of Bessel functions and some basic results which are used in this work. See \cite{G,Mc,W} for more information about this topic.

Let $\nu \in \R$. The \emph{Modified Bessel functions of the first kind} are defined by
\begin{equation*}\label{IBessel}
I_{\nu}(x)=\sum_{n=0}^{\infty}\frac{1}{\Gamma(n+\nu+1)n!}\left(\frac{x}{2}\right)^{2n+\nu}.
\end{equation*}
Such functions allow to define, for $\nu \in \R$ a non entire number, the \emph{Modified Bessel functions of second kind} or \emph{MacDonald's functions}  as follows \begin{equation*}\label{KBesselnoentire}
	K_{\nu}(x)=\frac{\pi}{2}\frac{I_\nu(x)-I_{-\nu}(x)}{\sin(\nu x)}.
	\end{equation*}
For the case $\mu \in \Z$ they are defined by
	\begin{equation*}\label{KBesselentire}
	K_{\mu}(x)=\lim_{\nu\to\mu}K_{\nu}(x)=\lim_{\nu\to\mu}\frac{\pi}{2}\frac{I_\nu(x)-I_{-\nu}(x)}{\sin(\nu x)}.
	\end{equation*}

These functions arise as the solutions for the ODE
\begin{equation*}\label{KBesselODE}
\frac{d^2}{dz^2}u(z)=\left(1+\frac{\nu^2}{z^2}\right)u(z)-\frac{1}{z}\frac{d}{dz}u(z).
\end{equation*}
Some properties of the MacDonald's functions used along the paper are the following ones:
\begin{enumerate}
	\item[(P1)] $K_{\nu}(z)=\int_{0}^{\infty}e^{-z\cosh t}\cosh(\nu t)\ dt, \quad |arg(z)| < \frac{\pi}{2}\ \mbox{or}\ \Re z = 0 \ \mbox{and } \nu=0$.
	\item[(P2)] $z\frac{d}{dz}K_{\nu}(z)+\nu K_{\nu}(z)=-zK_{\nu-1}(z)$.
	\item[(P3)] $z\frac{d}{dz}K_{\nu}(z)-\nu K_{\nu}(z)=-zK_{\nu+1}(z)$.
	\item[(P4)]When $0<z\ll \sqrt{\nu+1}$, we have
	\begin{equation*}\label{MacAsymp}
	K_{\nu}(z)\sim \frac{\Gamma(\nu)}{2}\left(\frac{2}{z}\right)^{\nu}, \quad \mbox{if} \ \nu\neq 0.
	\end{equation*}
	\item[(P5)]$K_{\nu}(z)=\biggl( \dfrac{\pi}{2z} \biggr)^{1/2}e^{-z}\biggl(1+O(1/z) \biggr),\quad z\to\infty.$
	\item[(P6)]$K_{\nu}=K_{-\nu}.$
	\item[(P7)]$zK_{\nu-1}(z)-zK_{\nu+1}(z)=-2\nu K{\nu}(z).$
\end{enumerate}

We also need in this paper the following decomposition lemma (see \cite{DZ}).

\begin{lemma}\label{decolemma}
Suppose $f\in L^1(\mathbb{R}^N)$ such that $\int_{\mathbb{R}^N}|x||f(x)|dx<\infty.$  Then there exists $F\in L^1(\mathbb{R}^N;\mathbb{R}^N)$ such that
\begin{equation*}
f=\left(\int_{\mathbb{R}^N}f(x)dx\right)\delta_0+\mbox{div}\, F
\end{equation*}
in the distributional sense and
\begin{equation*}
\|F\|_{L^1(\mathbb{R}^N;\mathbb{R}^N)}\leq C_d\int_{\mathbb{R}^N}|x||f(x)|dx.
\end{equation*}
\end{lemma}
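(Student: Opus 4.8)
The plan is to produce $F$ by an explicit formula rather than by an abstract existence argument, so that the $L^1$ bound drops out of a single change of variables. Writing $M=\int_{\R^N}f$, I want $\operatorname{div}F=f-M\delta_0$ in $\mathcal D'(\R^N)$. The Fourier side suggests the right object: with $\widehat{\partial_jF_j}(\xi)=i\xi_j\widehat{F_j}(\xi)$ and $\widehat{\delta_0}=1$, the desired identity $\widehat f(\xi)=M+i\xi\cdot\widehat F(\xi)$, combined with $\widehat f(\xi)-\widehat f(0)=\xi\cdot\int_0^1\nabla\widehat f(s\xi)\,ds$ and $\nabla\widehat f=-i\,\widehat{xf}$, forces $\widehat F(\xi)=-\int_0^1\widehat{xf}(s\xi)\,ds$. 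Undoing the dilation via $\widehat{xf}(s\,\cdot)=\widehat{g_s}$ with $g_s(x)=s^{-N-1}x\,f(x/s)$, I am led to define
\begin{equation*}
F(x):=-\int_0^1 s^{-N-1}\,x\,f(x/s)\,ds=-\int_1^\infty t^{N-1}\,x\,f(tx)\,dt,
\end{equation*}
the two forms being related by $t=1/s$. The Fourier computation is only a heuristic guide; I would verify everything directly in physical space.

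First I would check $F\in L^1(\R^N;\R^N)$ together with the quantitative bound. By Tonelli and the substitution $x=su$ (so $dx=s^N\,du$, $|x|=s|u|$),
\begin{align*}
\|F\|_{1}&\le\int_{\R^N}\int_0^1 s^{-N-1}|x|\,|f(x/s)|\,ds\,dx
=\int_0^1 s^{-N-1}\!\int_{\R^N}|x|\,|f(x/s)|\,dx\,ds\\
&=\int_0^1\!\int_{\R^N}|u|\,|f(u)|\,du\,ds=\int_{\R^N}|u|\,|f(u)|\,du,
\end{align*}
so the estimate holds with $C_d=1$; in particular the hypothesis $\int_{\R^N}|x||f(x)|\,dx<\infty$ is exactly what guarantees $F\in L^1$.

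Then I would establish the distributional identity by pairing with an arbitrary $\varphi\in C_c^\infty(\R^N)$; since $\langle\operatorname{div}F,\varphi\rangle=-\int_{\R^N}F\cdot\nabla\varphi$, it suffices to show $-\int F\cdot\nabla\varphi=\int f\varphi-M\varphi(0)$. Substituting $x=su$ inside the $x$-integral turns the integrand into $\bigl(u\cdot\nabla\varphi(su)\bigr)f(u)$, and the integration in $s$ collapses by the fundamental theorem of calculus, $\int_0^1 u\cdot\nabla\varphi(su)\,ds=\varphi(u)-\varphi(0)$, giving $\int_{\R^N}f(u)\bigl(\varphi(u)-\varphi(0)\bigr)\,du=\int f\varphi-M\varphi(0)$, as required. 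The hard part is purely a matter of rigor: justifying the two interchanges of order of integration. The delicate one is this last computation, where the naive bound on $s^{-N-1}\,x\,f(x/s)\cdot\nabla\varphi(x)$ carries a nonintegrable $s^{-1}$ singularity at $s=0$; the resolution is that after $x=su$ the factor $x\cdot\nabla\varphi(x)=s\,u\cdot\nabla\varphi(su)$ supplies the compensating power of $s$, so the $x$-integral is bounded by $\|\nabla\varphi\|_\infty\int_{\R^N}|u|\,|f(u)|\,du$ uniformly in $s\in(0,1)$, whence Tonelli applies. Thus the weighted integrability hypothesis is what drives both the $L^1$ bound and the validity of the identity.
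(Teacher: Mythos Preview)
Your argument is correct. The paper does not supply a proof of this lemma; it is quoted in the Appendix with a reference to Duoandikoetxea--Zuazua \cite{DZ}, and your construction is precisely the one underlying that reference: the vector field $F(x)=-\int_0^1 s^{-N-1}\,x\,f(x/s)\,ds$ is the standard explicit representative, and both the $L^1$ bound (with $C_d=1$) and the distributional identity fall out of the single substitution $x=su$ together with $\int_0^1 u\cdot\nabla\varphi(su)\,ds=\varphi(u)-\varphi(0)$. Your discussion of the Fubini justification is accurate; note, however, that the ``delicate'' interchange you flag is already controlled by the very same Tonelli computation you did for $\|F\|_1$, since $|F\cdot\nabla\varphi|\le\|\nabla\varphi\|_\infty\,|F|$, so no separate argument is really needed there.
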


\noindent {\it Acknowledgments.} The authors would like to thank to Jorge Gonz\'alez-Camus by his help and advice with the pictures along the paper.

\end{document}